\theoremstyle{plain}
\newtheorem{theorem}{Theorem}[section]
\newtheorem{conjecture}[theorem]{Conjecture}
\newtheorem{corollary}[theorem]{Corollary}
\newtheorem{proposition}[theorem]{Proposition}
\theoremstyle{definition}
\newtheorem{definition}[theorem]{Definition}
\theoremstyle{remark}
\newtheorem{example}[theorem]{Example}
\newtheorem{remark}[theorem]{Remark}
\DeclareMathOperator{\ZZ}{\mathbb{Z}}
\DeclareMathOperator{\QQ}{\mathbb{Q}}
\DeclareMathOperator{\KK}{\mathbb{K}}
\DeclareMathOperator{\Affine}{\mathbb{A}}
\DeclareMathOperator{\PP}{\mathbb{P}}
\DeclareMathOperator{\TT}{\mathbb{T}}
\DeclareMathOperator{\GG}{\mathbb{G}}
\DeclareMathOperator{\OEIShypergraphs}{A055621}
\DeclareMathOperator{\Bl}{Bl}
\DeclareMathOperator{\Cone}{Cone}
\DeclareMathOperator{\GL}{GL}
\DeclareMathOperator{\Hom}{Hom}
\DeclareMathOperator{\RelInt}{RelInt}
\def\@tocline#1#2#3#4#5#6#7{\relax
  \ifnum #1>\c@tocdepth
  \else
    \par \addpenalty\@secpenalty\addvspace{#2}
    \begingroup \hyphenpenalty\@M
    \@ifempty{#4}{
      \@tempdima\csname r@tocindent\number#1\endcsname\relax
    }{
      \@tempdima#4\relax
    }
    \parindent\z@ \leftskip#3\relax \advance\leftskip\@tempdima\relax
    \rightskip\@pnumwidth plus4em \parfillskip-\@pnumwidth
    #5\leavevmode\hskip-\@tempdima
      \ifcase #1
       \or\or \hskip 1em \or \hskip 2em \else \hskip 3em \fi
      #6\nobreak\relax
    \dotfill\hbox to\@pnumwidth{\@tocpagenum{#7}}\par
    \nobreak
    \endgroup
  \fi}
\title[Classifying additive smooth Fano toric varieties]{Classifying additive smooth Fano toric varieties}
\author{Fabi\'an Levic\'an-Santib\'a\~nez}
\address[F. Levic\'an-Santib\'a\~nez]{Fakultät für Mathematik, Universität Wien, Österreich}
\email{fabian.levican@univie.ac.at}
\author{Pedro Montero}
\address[P. Montero]{Departamento de Matem\'atica, Universidad T\'ecnica
  Fe\-de\-ri\-co San\-ta Ma\-r\'\i a, Valpara\'\i
  so, Chile}  \email{pedro.montero@usm.cl}
\date{\today}
\begin{document}

\begin{abstract}
Let $\KK$ be an algebraically closed field of characteristic zero. An irreducible algebraic variety $X$ over $\KK$ of dimension $n$ is called \emph{additive} if it admits a regular action of the additive group $(\KK^n, +)$ with an open orbit, and \emph{uniquely additive} if this action is unique up to isomorphism.

Huang and the second author have previously determined all additive smooth Fano toric threefolds. Here we determine all additive and uniquely additive smooth Fano toric varieties of dimension up to $6$ by computational means, and give a detailed classification for dimension up to $4$. To this effect, we introduce the \texttt{AdditiveToricVarieties} package for \emph{Macaulay2}, a software system for algebraic geometry and commutative algebra, with methods for working with additive group actions on complete toric varieties. Our work relies on results by Arzhantsev, Dzhunusov and Romaskevich, who relate the existence and uniqueness of such actions to conditions on the Demazure roots of the fans corresponding to the toric varieties. We also prove that every smooth complete toric variety of Picard rank two is additive.
\end{abstract}

\maketitle

\tableofcontents

\section{Introduction}

Let $\KK$ be an algebraically closed field of characteristic zero, let $\GG_a = (\KK, +)$ be its additive group, and let $\GG_a^n = \GG_a \times \cdots \times \GG_a$ ($n$ times) be the corresponding commutative unipotent group. For an irreducible algebraic variety $X$ over $\KK$ of dimension $n$, an \emph{additive action} on $X$ is a regular action $\GG_a^n \times X \to X$ with an open orbit. If $X$ is complete, this is equivalent to $X$ being a completion of affine space $\Affine^n$ that is equivariant with respect to the group of translations by elements of $\Affine^n$.

Additive actions naturally appear as a bridge between affine geometry and the geometry of projective varieties. They provide compactifications of affine space that are equivariant with respect to translations, and hence are related to classical classification problems studied by Hirzebruch \cite{Hir54}. Beyond their geometric interest, varieties with additive actions defined over number fields satisfy the Batyrev--Manin--Peyre principle, as shown by Chambert-Loir and Tschinkel \cite{CLT02}, which gives a precise description of the distribution of rational points of bounded height. From a differential-geometric perspective, they also relate to positivity questions: while the ampleness and nefness of the tangent bundle of smooth projective varieties are well understood through the results of Mori \cite{Mori79} and the Campana--Peternell conjecture (see e.g. \cite{MOSCWW15}), Liu proved in \cite{Liu23} that every variety with an additive action has a big tangent bundle. For further properties and techniques used in the study of additive actions, we kindly refer the reader to the survey by Arzhantsev and Zaitseva \cite{AZ22}.

An additive action on a toric variety $X_{\Sigma}$ is \emph{normalized} if it is normalized by the acting torus $\TT$ (i.e., if $\TT$ is a subgroup of the normalizer of $\GG_a^n$ in the automorphism group of $X_{\Sigma}$). Arzhantsev and Romaskevich \cite{ArzhRoma2017Additive} prove that any two normalized additive actions on $X_{\Sigma}$ are isomorphic, and that, if $X_{\Sigma}$ is complete, then the existence of an additive action on $X_{\Sigma}$ implies the existence of a normalized one.

We say $X_{\Sigma}$ is \emph{additive} if it admits an additive action and \emph{uniquely additive} if any such action is isomorphic to the normalized one.

\emph{Macaulay2} \cite{macaulay2} is a software system that is widely used for research in algebraic geometry and commutative algebra. It contains a variety of existing methods which may be used alongside our package. In particular, it implements polyhedral fans and normal toric varieties.\\
\hfill \\
Our paper is organized in the following way:

In Section \ref{section:preliminaries} we give brief and minimal preliminaries, including previous work due to Arzhantsev, Romaskevich and Dzhunusov \cite{ArzhRoma2017Additive, dzhu2021additive, dzhuV2022uniqueness} about additive actions on complete and projective toric varieties, along with classification results for Gorenstein Fano and smooth Fano toric varieties primarily due to Batyrev, K. Watanabe, M. Watanabe, Sato and Øbro \cite{Baty1981Toric, WataWata1982Classification, Baty1999Classification, Sato2000Toward, obro2007algorithm}.

In Section \ref{section:picard-two} we prove that every smooth complete toric variety $X$ of dimension $\dim(X) \geq 2$ and Picard number $\rho(X) = 2$ is additive, and also study certain projective bundles over Hirzebruch surfaces.

In Section \ref{section:package-description} we describe the methods included in the \emph{Macaulay2} \cite{macaulay2} and give examples of usage.

In Section \ref{section:applications} we apply our methods to classify smooth Fano toric varieties $X$ of dimension $\dim(X) \leq 6$ as (uniquely) additive or not, and match these results with existing classifications for $\dim(X) \leq 4$. This is noteworthy, since existential results for additivity of smooth Fano toric varieties had previously only been obtained for the case in which $\dim(X) \leq 3$ by Huang and the second author \cite{HuanMont2020Fano}, who used more geometric techniques.

Finally, in Section \ref{section:further-directions} we suggest some further directions and prove an enumerative result.

\section{Preliminaries}\label{section:preliminaries}

\subsection{Complete toric varieties} \hfill

We refer the reader to \cite{CoxLittSche2011Toric} for details about the general theory of toric varieties.

Let $\TT \cong (\KK^{\times})^n$ be a torus, let $M = \Hom(\TT, \KK^{\times})$ be the lattice of characters of $\TT$, and let $N = \Hom(M, \ZZ)$ be the dual lattice of $M$. Let $M_{\QQ}$ and $N_{\QQ}$ be the corresponding rational vector spaces, and let $\langle \cdot,\cdot\rangle : N_{\QQ}\times M_{\QQ}\to \QQ$ be the corresponding non-degenerate duality pairing.  Unless specified otherwise, $M \cong \ZZ^n$.

For a fan $\Sigma$ in $N_{\QQ}$, we denote by $\Sigma(1)$ the set of rays and by $X_{\Sigma}$ the corresponding toric variety of dimension $n$. For each ray $\rho \in \Sigma(1)$, we denote by $p_{\rho}$ the unique primitive lattice vector on $\rho$.

From now until the end of the paper, all varieties are assumed to be irreducible, and all fans $\Sigma$ and toric varieties $X_{\Sigma}$ are assumed to be \emph{complete}.

\begin{definition}
    For each ray $\rho \in \Sigma(1)$, define the set
    \[ 
    \mathfrak{R}_\rho \coloneq \{ m \in M: \quad \langle p_{\rho}, m \rangle = -1 \text{ and } \langle p_{\rho'}, m \rangle \geq 0 \text{ for all } \rho' \in \Sigma(1) \setminus \{ \rho \} \}.
    \]
    Define also $\mathfrak{R} \coloneq \bigsqcup_{\rho \in \Sigma(1)} \mathfrak{R}_{\rho}$. The elements of $\mathfrak{R}$ are called the \textbf{Demazure roots} of $\Sigma$.
\end{definition}

\begin{proposition}
    Let $\rho \in \Sigma(1)$. Define the set
    \[
    \mathfrak{R}_{\rho}' \coloneq \{ m \in M_{\QQ}: \quad \langle p_{\rho}, m \rangle = -1 \text{ and } \langle p_{\rho'}, m \rangle \geq 0 \text{ for all } \rho' \in \Sigma(1) \setminus \{ \rho \} \}.
    \]
    Then, $\mathfrak{R}_{\rho}'$ is bounded. In particular, the set $\mathfrak{R}$ of Demazure roots of $\Sigma$ is finite.
\end{proposition}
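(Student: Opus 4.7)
The plan is to show that $\mathfrak{R}_\rho'$ is a bounded polyhedron in $M_\QQ$; finiteness of $\mathfrak{R}_\rho = \mathfrak{R}_\rho' \cap M$ then follows because any bounded subset of a lattice is finite, and $\mathfrak{R}$ is a finite disjoint union of the $\mathfrak{R}_\rho$ over the finite set $\Sigma(1)$.

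To prove boundedness I would analyze the recession cone of the polyhedron $\mathfrak{R}_\rho'$ and show it is $\{0\}$. Any $m$ in the recession cone satisfies $\langle p_\rho, m\rangle = 0$ and $\langle p_{\rho'}, m\rangle \geq 0$ for all $\rho' \in \Sigma(1)\setminus\{\rho\}$, and hence $\langle p_{\rho'}, m\rangle \geq 0$ for \emph{every} $\rho' \in \Sigma(1)$. The main step then leverages completeness of $\Sigma$: since $|\Sigma| = N_\QQ$, every $v \in N_\QQ$ lies in some cone $\sigma \in \Sigma$ and can be expressed as a nonnegative $\QQ$-linear combination of primitive ray generators $p_{\rho'}$. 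Applying this both to $v$ and to $-v$, and using $\langle p_{\rho'}, m\rangle \geq 0$ for all $\rho'$, we obtain simultaneously $\langle v, m\rangle \geq 0$ and $\langle v, m\rangle \leq 0$ for every $v \in N_\QQ$, so non-degeneracy of the pairing forces $m = 0$.

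With the recession cone trivial, $\mathfrak{R}_\rho'$ is a compact polytope and the claim follows. I do not anticipate a serious obstacle: the argument is essentially Farkas-style combined with the fact that the primitive ray generators of a complete fan positively span $N_\QQ$. The only detail worth noting is that the union $\mathfrak{R} = \bigsqcup_\rho \mathfrak{R}_\rho$ is genuinely disjoint, which is immediate since no $m \in M$ can satisfy $\langle p_\rho, m\rangle = -1$ and $\langle p_{\rho'}, m\rangle \geq 0$ simultaneously for two distinct rays $\rho$ and $\rho'$.
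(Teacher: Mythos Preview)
Your proposal is correct and follows essentially the same route as the paper: both arguments reduce boundedness of the polyhedron $\mathfrak{R}_\rho'$ to showing its recession cone is trivial (the paper phrases this as the nonexistence of an affine ray $x + \QQ^{\geq 0}a$), and both conclude by observing that a nonzero recession direction $a$ would satisfy $\langle p_{\rho'}, a\rangle \geq 0$ for all $\rho' \in \Sigma(1)$, which contradicts completeness. The only difference is that you spell out the final implication explicitly (via positive spanning of $N_\QQ$ by the ray generators and nondegeneracy of the pairing), whereas the paper simply asserts ``then $\Sigma$ is not complete.''
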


\begin{proof}
    In \cite[\S 4, Lemme 7]{Dem70}, Demazure proves that $\mathfrak{R}$ is finite under a related hypothesis, namely that the fan $\Sigma$ is \emph{semi-complete}. This is equivalent to $\Sigma$ being \emph{complete} for toric varieties defined over a field. In any case, we adapt his proof to show that $\mathfrak{R}_{\rho}'$ is bounded if $\Sigma$ is complete.

    Assume $\mathfrak{R}_{\rho}'$ is not bounded. Then, since it is a rational polyhedron, it contains an affine ray $x + \QQ_{\geq 0}a$, where $x \in \mathfrak{R}_{\rho}'$ and $a \in N_{\QQ}$. But $\langle p_{\rho}, x + \QQ_{\geq 0}a \rangle = \langle p_{\rho}, x \rangle + \QQ_{\geq 0}\langle p_{\rho} , a \rangle = -1$, so $\langle p_{\rho} , a \rangle = 0$ and $\langle p_{\rho'} , a \rangle \geq 0$ for all $\rho' \in \Sigma(1) \setminus \{\rho\}$. Then $\Sigma$ is not complete.
\end{proof}

\begin{definition}
    Let $S = \{e_1, \ldots, e_n\} \subseteq \mathfrak{R}$. If there exists a linear order $\rho_1, \ldots, \rho_{\# \Sigma(1)}$ on $\Sigma(1)$ such that $\langle p_i, e_j \rangle = -\delta_{ij}$ for all $i, j = 1, \ldots, n$, then $S$ is called a \textbf{complete collection} of Demazure roots of $\Sigma$.
\end{definition}

\begin{theorem}\cite[Theorem 1]{ArzhRoma2017Additive}\label{theorem:complete-collections}
    Complete collections of Demazure roots of $\Sigma$ are in one-to-one correspondence with normalized additive actions on $X_{\Sigma}$.
\end{theorem}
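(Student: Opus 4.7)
The plan is to establish the bijection via Demazure's classification of $\TT$-normalized one-parameter additive subgroups of $\mathrm{Aut}(X_{\Sigma})$. For each Demazure root $e \in \mathfrak{R}_\rho$, one associates a locally nilpotent derivation
\[
\partial_e = \Bigl(\prod_{\rho' \neq \rho} x_{\rho'}^{\langle p_{\rho'}, e\rangle}\Bigr) \frac{\partial}{\partial x_\rho}
\]
of the Cox ring $\KK[x_\rho : \rho \in \Sigma(1)]$, homogeneous of weight $e$ for the torus, whose exponential integrates to a $\TT$-normalized subgroup $H_e \cong \GG_a$ of $\mathrm{Aut}(X_{\Sigma})$. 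Conversely, every $\TT$-normalized $\GG_a$-subgroup is of this form. This identification is the starting point for both directions of the bijection.

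For the forward direction, given a complete collection $\{e_1, \ldots, e_n\}$ with a linear order $\rho_1, \ldots, \rho_{\#\Sigma(1)}$ realizing $\langle p_i, e_j\rangle = -\delta_{ij}$, I would first check that $H_{e_1}, \ldots, H_{e_n}$ pairwise commute. A direct computation of the bracket of the derivations shows that $[\partial_{e_i}, \partial_{e_j}]$ is a scalar multiple of $\partial_{e_i + e_j}$ when $e_i + e_j \in \mathfrak{R}$ and vanishes otherwise. For $i \neq j$, however, $\langle p_i, e_i+e_j\rangle = \langle p_j, e_i+e_j\rangle = -1$, so $e_i+e_j$ pairs to $-1$ with two distinct rays and cannot be a Demazure root; hence the $H_{e_i}$ commute. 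Since the matrix $(\langle p_i, e_j\rangle)_{i,j\leq n} = -I_n$ has full rank, the weights $e_1,\ldots,e_n$ are $\QQ$-linearly independent and the $H_{e_i}$ generate a subgroup isomorphic to $\GG_a^n$. Openness of the orbit then follows because at a general point of $\TT \subseteq X_{\Sigma}$ the derivations $\partial_{e_i}$ produce $n$ linearly independent tangent vectors, saturating the $n$-dimensional tangent space.

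For the reverse direction, given a normalized additive action $\GG_a^n \times X_{\Sigma} \to X_{\Sigma}$, I would decompose the abelian Lie algebra $\mathfrak{g} = \mathrm{Lie}(\GG_a^n)$ into $\TT$-weight spaces $\mathfrak{g} = \bigoplus_e \mathfrak{g}_e$ under the conjugation action of $\TT$. Each nonzero weight vector integrates to a $\TT$-normalized $\GG_a$-subgroup, hence corresponds to a Demazure root by the classification recalled above. The open orbit condition forces the support to consist of $n$ $\QQ$-linearly independent roots $e_1,\ldots,e_n$, each with one-dimensional weight space. Commutativity of $\GG_a^n$, combined with the bracket computation above, then forces the $e_i$ to be supported on pairwise distinct rays $\rho_1,\ldots,\rho_n$ satisfying $\langle p_i, e_j\rangle = -\delta_{ij}$, i.e.\ to form a complete collection. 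The two constructions are mutually inverse by construction, giving the bijection.

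The main obstacle is the classification identifying $\TT$-homogeneous locally nilpotent derivations of the Cox ring with Demazure roots, which is Demazure's classical result and underlies both directions. A secondary difficulty lies in the reverse direction: ruling out that two weights $e_i, e_j$ could both belong to some $\mathfrak{R}_\rho$ for the same ray $\rho$, which is excluded by combining commutativity of the $H_{e_i}$ with the openness of the orbit, thereby securing the existence of the linear order on $\Sigma(1)$ required by the definition of a complete collection.
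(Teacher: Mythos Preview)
The paper does not prove this theorem at all: it is quoted verbatim from \cite[Theorem 1]{ArzhRoma2017Additive} and used as a black box, so there is no ``paper's own proof'' to compare against. Your outline is not competing with anything in the present paper; rather, it is a sketch of the argument in the cited reference.

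That said, your sketch is essentially the Arzhantsev--Romaskevich approach: identify $\TT$-normalized $\GG_a$-subgroups with Demazure roots via homogeneous locally nilpotent derivations of the Cox ring, then check commutativity and the open-orbit condition in the forward direction, and decompose $\mathrm{Lie}(\GG_a^n)$ into $\TT$-weight spaces in the reverse direction. One small imprecision: your general claim that $[\partial_{e_i},\partial_{e_j}]$ is a scalar multiple of $\partial_{e_i+e_j}$ when $e_i+e_j\in\mathfrak{R}$ and vanishes otherwise is not quite the right dichotomy in general. For a complete collection the vanishing is more direct: since $\langle p_{\rho_i},e_j\rangle=0$ for $i\neq j$, the monomial coefficient of $\partial_{e_j}$ does not involve $x_{\rho_i}$, so $\partial_{e_i}$ kills it outright, and symmetrically; no appeal to whether $e_i+e_j$ is a root is needed. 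Your handling of the reverse direction, in particular the observation that two roots in the same $\mathfrak{R}_\rho$ would yield tangent vectors proportional to $\partial/\partial x_\rho$ and hence violate openness of the orbit, is the correct way to secure distinct rays.
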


\begin{theorem}\cite[Theorem 2]{ArzhRoma2017Additive}
    Any two normalized actions on $X_{\Sigma}$ are isomorphic.
\end{theorem}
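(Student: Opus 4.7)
The plan is to apply Theorem~\ref{theorem:complete-collections} and reduce the statement to a conjugacy claim inside $\operatorname{Aut}(X_{\Sigma})$. By Theorem~\ref{theorem:complete-collections}, every normalized additive action on $X_{\Sigma}$ comes from a complete collection $S = \{e_1, \ldots, e_n\}$ of Demazure roots, so to show that the actions attached to two collections $S, S'$ are isomorphic it suffices to exhibit an automorphism $\varphi \in \operatorname{Aut}(X_{\Sigma})$ whose conjugation carries the image subgroup $U_S \subseteq \operatorname{Aut}(X_{\Sigma})$ to $U_{S'}$; such a $\varphi$ automatically determines an intertwining group automorphism of $\GG_a^n$.

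The first step is combinatorial. The pairing condition $\langle p_i, e_j \rangle = -\delta_{ij}$ for $i, j \leq n$ forces the primary rays $\rho_1, \ldots, \rho_n$ to be a $\ZZ$-basis of $N$ and $-e_1, \ldots, -e_n$ to be its dual basis in $M$. Hence a complete collection is essentially the datum of an ordered $n$-tuple of rays whose primitive vectors form a basis of $N$ and satisfy the non-negativity constraint against every remaining ray. Reordering the primary rays only permutes the $\GG_a$-factors and yields an obviously isomorphic action, so the real content is to compare two collections $S$ and $S'$ whose unordered primary bases differ.

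The second step is to exploit the description of $\operatorname{Aut}(X_{\Sigma})$ originating with Demazure and refined by Cox: its identity component is generated by the acting torus $\TT$ together with the root subgroups $U_e$ for $e \in \mathfrak{R}$, where $\TT$ normalizes each $U_e$ via the character $e$, and commutators $[U_e, U_{e'}]$ lie in $U_{e+e'}$ whenever the sum is a root (and vanish otherwise). Both $U_S$ and $U_{S'}$ are commutative unipotent subgroups of dimension $n$ that are normalized by $\TT$, and the goal is to use these commutation relations to conjugate $U_S$ onto $U_{S'}$ inside $\operatorname{Aut}(X_{\Sigma})$.

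The main obstacle is the explicit construction of the conjugating element when $S$ and $S'$ are supported on different bases of rays. A natural strategy is induction on the number of primary rays shared between $S$ and $S'$, reducing to a single \emph{ray swap} at a time; at each step one would choose a root $f \in \mathfrak{R}$ whose root subgroup $U_f$ implements the swap via conjugation, using Demazure's commutator identities to verify that the conjugate $u_f U_S u_f^{-1}$ agrees with the intermediate collection. Verifying that such an $f$ can always be found and that the iterated conjugations eventually produce $U_{S'}$ is the essential technical core of the argument.
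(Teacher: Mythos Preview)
The paper does not contain a proof of this theorem: it is stated solely with the citation \cite[Theorem 2]{ArzhRoma2017Additive} and no argument is reproduced. There is therefore no ``paper's own proof'' against which to compare your proposal; the result is simply imported from Arzhantsev--Romaskevich as a black box.

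As a standalone sketch, your overall architecture---translate via Theorem~\ref{theorem:complete-collections} to complete collections, then conjugate the corresponding unipotent subgroups inside $\operatorname{Aut}(X_{\Sigma})$ using the Demazure--Cox description---is the natural line of attack and is indeed how such results are typically proved. However, the proposal is explicitly incomplete at its own declared ``technical core'': you assert that a suitable root $f$ implementing each ``ray swap'' can be found, but you give no mechanism for producing it, and conjugation by a single root subgroup $U_f$ does not in general permute the primary rays of a complete collection in the clean way your induction step presumes. In the original Arzhantsev--Romaskevich argument the conjugacy is obtained not by ad hoc root-subgroup swaps but by invoking the structure theory of $\operatorname{Aut}(X_{\Sigma})^{0}$ (its unipotent radical and a Levi decomposition), which gives the conjugacy of the relevant subgroups more directly. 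Your outline would need substantial additional work to become a proof, and in any case cannot be compared to anything in the present paper.
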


\begin{theorem}\cite[Theorem 3]{ArzhRoma2017Additive}
    The following are equivalent:
    \begin{enumerate}[1.]
        \item There exists an additive action on $X_{\Sigma}$.
        \item There exists a normalized additive action on $X_{\Sigma}$.
    \end{enumerate}
\end{theorem}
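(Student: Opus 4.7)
The implication $(2) \Rightarrow (1)$ is immediate, since a normalized additive action is in particular additive. My plan for $(1) \Rightarrow (2)$ is to work inside the automorphism group $G = \mathrm{Aut}(X_\Sigma)$, which by the results of Demazure and Cox is a connected linear algebraic group containing $\mathbb{T}$ as a maximal torus, with the unipotent radical of each Borel subgroup generated by the root subgroups $U_e \cong \mathbb{G}_a$ ($e \in \mathfrak{R}$), all of which are normalized by $\mathbb{T}$. An additive action corresponds to an embedding $U \hookrightarrow G$ of a commutative unipotent subgroup $U \cong \mathbb{G}_a^n$ whose induced action on $X_\Sigma$ has an open orbit.

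The strategy is then to find $g \in G$ with $g U g^{-1}$ normalized by $\mathbb{T}$; since conjugation by $g$ in $G$ corresponds to applying the automorphism $g$ to $X_\Sigma$, the conjugated action is still additive (openness of the orbit is preserved) and is now normalized in the required sense. To produce $g$, I would first show that the normalizer $N_G(U)$ contains a maximal torus $T'$ of $G$: embed $U$ in a Borel subgroup $B \subseteq G$, and use the fact that $U$ is commutative of dimension $\dim X_\Sigma$ and acts with an open orbit to argue that, after a further conjugation inside $B$ if necessary, $U$ is stable under the maximal torus $T_B \subset B$. The conjugacy of maximal tori in the connected group $G$ then provides $g \in G$ with $g T' g^{-1} = \mathbb{T}$, and so $\mathbb{T}$ normalizes $g U g^{-1}$, as required.

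The main obstacle will be the step producing a maximal torus that normalizes $U$. This is not a formal consequence of general algebraic-group theory: a commutative unipotent subgroup of a linear algebraic group need not be stable under any maximal torus of a Borel containing it. The argument really must use the specific structure of $\mathrm{Aut}(X_\Sigma)$ from the Demazure--Cox description, identifying $U$ up to $B$-conjugation with a subgroup generated by root subgroups $U_{e_1}, \ldots, U_{e_n}$ indexed by a complete collection of Demazure roots. Once this identification is made, both $\mathbb{T}$-stability and the existence of a normalized additive action follow from Theorem \ref{theorem:complete-collections}, closing the proof.
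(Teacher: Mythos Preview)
The paper itself does not prove this statement; it is quoted from \cite[Theorem~3]{ArzhRoma2017Additive} without argument. Nonetheless, your sketch contains a genuine flaw worth naming.

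Your plan is to conjugate the given $U \cong \GG_a^n$ by some $g \in G = \operatorname{Aut}(X_\Sigma)$ so that $gUg^{-1}$ is normalized by $\TT$. If this could always be done, it would prove strictly more than the theorem asserts: since conjugation by $g$ is an automorphism of $X_\Sigma$, you would have shown that \emph{every} additive action is isomorphic to a normalized one. Combined with the fact that any two normalized actions are isomorphic (the theorem stated immediately before this one), this would force every additive complete toric variety to be uniquely additive. That is false already for $\PP^2$, which by Hassett--Tschinkel carries two non-isomorphic additive actions. Concretely, the action arising from the local algebra $\KK[x]/(x^3)$ yields a commutative unipotent $U \subset \operatorname{PGL}_3$ whose Lie algebra is not a sum of root spaces for any maximal torus, so $U$ is not conjugate to any $\TT$-stable subgroup. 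The step you flag as the ``main obstacle'' is therefore not merely delicate --- it is impossible in general, and your proposed resolution (identifying $U$ up to $B$-conjugacy with a product $U_{e_1}\cdots U_{e_n}$ of root subgroups) cannot be carried out.

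The argument in \cite{ArzhRoma2017Additive} respects the purely existential nature of the statement. One does conjugate $U$ into the unipotent radical $N$ of a Borel $B \supseteq \TT$, which is always possible since maximal unipotent subgroups are conjugate. Then $N$ itself is $\TT$-normalized and, containing $U$, also acts with an open orbit. From this one extracts a complete collection of Demazure roots, and Theorem~\ref{theorem:complete-collections} produces a normalized additive action --- one that need not be, and generally is not, isomorphic to the action you started with.
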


The following proposition will reduce the search space of the problem of finding complete collections of Demazure roots, since it involves a notion defined in terms of subsets of $\Sigma(1)$ instead of linear orders.

\begin{definition}
    Let $S = \{s_1, \ldots, s_{\# S}\} \subset N_{\QQ}$ be a finite subset. The \textbf{negative octant} of $S$ is the cone of non-positive linear combinations of elements of $S$
    \[
    \left \{ \sum_{i = 1}^{\# S} a_i s_i: \quad a_i \in \QQ \text{ and } a_i \leq 0\right \} \subseteq N_{\QQ}.
    \]
\end{definition}

\begin{proposition}\cite[Proposition 1, Lemma 2]{dzhu2021additive}\label{proposition:negative-octant-criterion}
    The following are equivalent:
    \begin{enumerate}[1.]
        \item $X_{\Sigma}$ is additive.
        \item There exist rays $\rho_1, \ldots, \rho_n \in \Sigma(1)$ such that $B = \{p_1, \ldots, p_n\}$ is a basis of the lattice $N$ and the remaining rays in $\Sigma(1)$ are contained in the negative octant of $B$. In this case, $-p_i^{\vee} \in \mathfrak{R}_i$ for all $i = 1, \ldots, n$ and $-B^{\vee}~=~\{-p_1^{\vee},~\ldots,~-p_n^{\vee}\} \subset \mathfrak{R}$ is a complete collection of Demazure roots of $\Sigma$. Here, $B^{\vee}$ is the \emph{linear algebraic} dual basis of $B$.
    \end{enumerate}
\end{proposition}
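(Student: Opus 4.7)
The plan is to go through Theorem \ref{theorem:complete-collections} (together with the theorem asserting that existence of an additive action implies existence of a normalized one): this reduces the proposition to comparing complete collections of Demazure roots with lattice bases that have the negative-octant property. The bridge between the two is the dual-basis assignment $p_i \longleftrightarrow e_i = -p_i^{\vee}$, since the defining relation $\langle p_i, e_j \rangle = -\delta_{ij}$ of a complete collection is exactly the (sign-flipped) dual-basis relation.

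For the direction (2) $\Rightarrow$ (1), I would take such a basis $B = \{p_1, \ldots, p_n\}$ and set $e_j \coloneq -p_j^{\vee} \in M$, so that $\langle p_i, e_j \rangle = -\delta_{ij}$. It then remains to check that each $e_j$ actually lies in $\mathfrak{R}_{\rho_j}$, i.e. that $\langle p_{\rho'}, e_j \rangle \geq 0$ for every $\rho' \in \Sigma(1) \setminus \{\rho_j\}$. For $\rho' = \rho_i$ with $i \neq j$ this is immediate from $\langle p_i, e_j \rangle = 0$; for a remaining ray $\rho'$, writing $p_{\rho'} = \sum_i a_i p_i$ with $a_i \leq 0$ gives $\langle p_{\rho'}, e_j \rangle = -a_j \geq 0$. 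Hence $\{-p_1^{\vee}, \ldots, -p_n^{\vee}\}$ is a complete collection, and Theorem \ref{theorem:complete-collections} produces the desired normalized additive action on $X_{\Sigma}$.

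For the direction (1) $\Rightarrow$ (2), additivity furnishes a complete collection $\{e_1, \ldots, e_n\}$ together with an ordering $\rho_1, \ldots, \rho_{\# \Sigma(1)}$ of the rays with $\langle p_i, e_j \rangle = -\delta_{ij}$ for $1 \leq i, j \leq n$. Since this $n \times n$ matrix equals $-I$, the set $\{p_1, \ldots, p_n\}$ is $\QQ$-linearly independent. For any remaining ray $\rho'$, write $p_{\rho'} = \sum_i a_i p_i$ in $N_{\QQ}$; the Demazure-root inequality $\langle p_{\rho'}, e_j \rangle \geq 0$ then forces $a_j \leq 0$, placing $p_{\rho'}$ in the negative octant of $B$.

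The main obstacle I anticipate is promoting the $\QQ$-linear independence of $\{p_1, \ldots, p_n\}$ to being a $\ZZ$-basis of the \emph{lattice} $N$. I would address this via the lattice morphism $\varphi \colon N \to \ZZ^n$, $p \mapsto (-\langle p, e_j \rangle)_j$, which takes integer values because each $e_j$ lies in the lattice $M$. It sends $p_i$ to the $i$-th standard basis vector of $\ZZ^n$, and so is surjective; it is also injective since $\{e_j\}$ is a $\QQ$-basis of $M_{\QQ}$ and the pairing is non-degenerate. Hence $\varphi$ is a lattice isomorphism carrying $\{p_1, \ldots, p_n\}$ to the standard basis of $\ZZ^n$, so $\{p_1, \ldots, p_n\}$ is itself a $\ZZ$-basis of $N$.
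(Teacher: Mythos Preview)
The paper does not supply its own proof of this proposition; it is simply cited from \cite{dzhu2021additive} without argument, so there is nothing in the paper to compare against. Your argument is correct: the dual-basis identification $e_j = -p_j^{\vee}$ is exactly the bridge between complete collections and lattice bases with the negative-octant property, the verification that each $e_j \in \mathfrak{R}_{\rho_j}$ is handled correctly in both directions, and your promotion of $\QQ$-independence to a $\ZZ$-basis via the lattice isomorphism $\varphi\colon N \to \ZZ^n$, $p \mapsto (-\langle p, e_j\rangle)_j$, is clean and valid.
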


\begin{theorem}\cite{dzhuV2022uniqueness}\label{theorem:uniqueness}
    The following are equivalent:
    \begin{enumerate}[1.]
        \item $X_{\Sigma}$ is uniquely additive.
        \item There exist rays $\rho_1, \ldots, \rho_n \in \Sigma(1)$ such that $B = \{p_1, \ldots, p_n\}$ is a basis of the lattice $N$, the remaining rays in $\Sigma(1)$ are contained in the negative octant of $B$, and $\mathfrak{R}_i = \{-p_i^{\vee}\}$ for all $i = 1, \ldots, n$.
    \end{enumerate}
\end{theorem}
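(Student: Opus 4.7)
The plan is to translate the statement into a combinatorial uniqueness question about complete collections of Demazure roots, by combining Theorem~\ref{theorem:complete-collections} with the fact (stated as the two following theorems) that any two normalized additive actions on $X_\Sigma$ are isomorphic and that every additive action on a complete $X_\Sigma$ is isomorphic to a normalized one. Under this dictionary, unique additivity translates, up to relabeling, into the statement that $\Sigma$ admits a unique complete collection of Demazure roots.

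For the implication $(2)\Rightarrow(1)$, Proposition~\ref{proposition:negative-octant-criterion} already produces the distinguished complete collection $-B^\vee$, giving additivity. To obtain uniqueness, consider an arbitrary complete collection $\{e_1,\ldots,e_n\}$ with witnessing ordering of rays $\rho'_1,\ldots,\rho'_{\#\Sigma(1)}$. The duality relations $\langle p'_j, e_k\rangle=-\delta_{jk}$ for $j,k=1,\ldots,n$ force $\{p'_1,\ldots,p'_n\}$ to be a basis of $N$. Combining the negative-octant inclusion of the remaining rays of $\Sigma$ in the cone spanned by $B$ with the sign constraints $\langle p_{\rho'}, e_k\rangle\ge 0$ encoded in the definition of $\mathfrak{R}_{\rho'_k}$, I would deduce that this basis equals $B$ up to reordering; the singleton hypothesis $\mathfrak{R}_i=\{-p_i^\vee\}$ then pins down each $e_k$, so that the collection coincides with $-B^\vee$.

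For the implication $(1)\Rightarrow(2)$, unique additivity implies additivity, so Proposition~\ref{proposition:negative-octant-criterion} supplies the basis $B$, the negative-octant inclusion, and $-p_i^\vee\in\mathfrak{R}_i$. Arguing by contraposition, suppose some $\mathfrak{R}_i$ contains a root $f\neq -p_i^\vee$. Then $f+p_i^\vee$ lies in the hyperplane $\langle p_i,\cdot\rangle=0$, so that replacing $-p_i^\vee$ by $f$ (after adjusting, if needed, the other entries of the collection by suitable elements of the kernel of $p_i$ compatible with the sign constraints imposed by the negative-octant hypothesis) produces a second complete collection of $\Sigma$ genuinely different from $-B^\vee$.

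The main obstacle is precisely this last step: extracting a genuinely non-isomorphic additive action from two distinct complete collections. Since any two normalized additive actions are abstractly isomorphic by the theorem following Theorem~\ref{theorem:complete-collections}, the desired non-isomorphism must be detected at the level of the specific embedding of $\GG_a^n$ into the automorphism group of $X_\Sigma$ and of its open orbit, rather than at the level of the abstract action. This is the technical content of \cite{dzhuV2022uniqueness}, which I would invoke as a black box in order to conclude that distinct complete collections yield non-isomorphic additive actions, thereby contradicting unique additivity and completing the proof.
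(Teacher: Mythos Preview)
The paper does not supply a proof of this theorem: it is stated with a bare citation to \cite{dzhuV2022uniqueness} and used thereafter as a black box. So there is no ``paper's own proof'' to compare against; the honest comparison is between your sketch and the actual content of Dzhunusov's argument.

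That said, your sketch has a structural confusion worth flagging. You set up the dictionary ``unique additivity $\leftrightarrow$ uniqueness of the complete collection,'' but this dictionary is false on its face: by the theorem immediately following Theorem~\ref{theorem:complete-collections}, \emph{any} two normalized additive actions are isomorphic, regardless of how many complete collections exist. Thus in your $(2)\Rightarrow(1)$ argument, proving that every complete collection coincides with $-B^\vee$ establishes nothing new---it only re-proves that normalized actions are unique up to isomorphism, which we already knew. What must actually be shown is that every \emph{non-normalized} additive action is isomorphic to the normalized one, and your argument does not touch this. Symmetrically, in $(1)\Rightarrow(2)$, producing a second complete collection from an extra root $f\in\mathfrak{R}_i$ again only yields a second \emph{normalized} action, which is automatically isomorphic to the first; it does not by itself contradict unique additivity.

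You do recognise this obstacle in your final paragraph, but your resolution is to invoke \cite{dzhuV2022uniqueness} as a black box---which is exactly the theorem you are trying to prove, so the argument becomes circular. The genuine content of Dzhunusov's result lies in analysing how extra Demazure roots allow one to build \emph{non-normalized} $\GG_a^n$-actions (via one-parameter unipotent subgroups not simultaneously normalized by the torus) and showing these are non-isomorphic to the normalized one; conversely, when each $\mathfrak{R}_i$ is a singleton, one shows that any additive action can be conjugated to the normalized one. None of that mechanism appears in your sketch.
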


The following proposition will further reduce the aforementioned search space. It is inspired by \cite[Theorem 4]{ArzhRoma2017Additive} (here, Theorem \ref{theorem:inscribed-in-a-rectangle}).

\begin{proposition}
    Assume there exist rays $\rho_1, \ldots, \rho_n \in \Sigma(1)$ such that $B = \{p_1, \ldots, p_n\}$ is a basis of the lattice $N$ and the remaining rays in $\Sigma(1)$ are contained in the negative octant of $B$. Then, $\rho_1, \ldots, \rho_n$ are the extremal rays of a maximal cone $\sigma \in \Sigma$.
\end{proposition}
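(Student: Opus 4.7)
The plan is to show that the cone $\sigma = \Cone(p_1, \ldots, p_n)$ is itself an element of $\Sigma$; since $\dim \sigma = n = \dim N_\QQ$, this cone is automatically maximal. The strategy is to locate a point in the relative interior of $\sigma$, use completeness of $\Sigma$ to place that point in some cone $\tau$, and then extract $\sigma = \tau$ from the sign constraints imposed by the negative octant hypothesis.

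First I would pick a convenient interior point, for instance $v = p_1 + \cdots + p_n$, which lies in the relative interior of $\sigma$ and, by completeness of $\Sigma$, belongs to some cone $\tau \in \Sigma$. Write the rays of $\tau$ as $\rho_{i_1}, \ldots, \rho_{i_k}$ (those that belong to $\{\rho_1, \ldots, \rho_n\}$) together with additional rays $\eta_1, \ldots, \eta_\ell$ whose primitive generators $q_j$ lie in the negative octant of $B$, say $q_j = \sum_m c_{jm} p_m$ with $c_{jm} \leq 0$. Expanding a non-negative representation $v = \sum_s \alpha_s p_{i_s} + \sum_j \beta_j q_j$ in the basis $B$ and equating coefficients with the known expansion $v = \sum_m p_m$ yields, for each $m$, an identity of the form $1 = \alpha_{(m)} + \sum_j \beta_j c_{jm}$, where $\alpha_{(m)}$ is $\alpha_s$ if $m = i_s$ and zero otherwise. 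Since the term $\sum_j \beta_j c_{jm}$ is non-positive, we must have $\alpha_{(m)} \geq 1 > 0$, and in particular $m \in \{i_1, \ldots, i_k\}$ for every $m$. Hence $\rho_1, \ldots, \rho_n$ are all rays of $\tau$, so $\sigma \subseteq \tau$.

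To rule out extra rays $\eta_j$ I would use strong convexity. If such an $\eta_j$ existed, its primitive generator $q_j$ would lie in $\tau$; on the other hand, since $q_j = \sum_m c_{jm} p_m$ with $c_{jm} \leq 0$ and every $p_m$ is already in $\tau$, the vector $-q_j = \sum_m (-c_{jm}) p_m$ is a non-negative combination of rays of $\tau$ and therefore also lies in $\tau$. Thus $\tau$ contains the line $\QQ q_j$, contradicting the strong convexity of cones in a fan. Therefore $\tau$ has no additional rays and $\tau = \sigma$, proving $\sigma \in \Sigma$; maximality follows from $\dim \sigma = n$.

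The only step that requires some care is the coefficient comparison in the second paragraph: one must invoke uniqueness of the expansion in the basis $B$ and track the sign of $\sum_j \beta_j c_{jm}$ correctly. Everything else is a direct application of strong convexity and the hypothesis that non-basis rays live in the negative octant of $B$.
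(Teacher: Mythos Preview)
Your proof is correct and follows essentially the same approach as the paper's: pick an interior point of $\Cone(p_1,\ldots,p_n)$, use completeness to place it in some cone of $\Sigma$, compare coefficients in the basis $B$ to force all $p_i$ among its rays, and then invoke strong convexity to exclude any extra rays. The only cosmetic difference is that the paper insists the point lie in the relative interior of a maximal cone and maximizes the number of $p_i$'s appearing among its generators, an optimization your argument shows to be unnecessary.
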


\begin{proof}
Let $u \in \RelInt(\Cone(p_1, \ldots, p_n)) \cap N$ be a point that is also in the relative interior of some maximal cone $\sigma = \Cone(p_1, \ldots, p_m, p_1', \ldots, p_{M}') \in \Sigma$ with $\QQ_{\geq 0} p_i' \in \Sigma(1) \setminus \{\rho_1, \ldots, \rho_n\}$, which exists since $\Sigma$ is complete. We may write $u = \sum_{i = 1}^n \lambda_i p_i$ and $u = \sum_{i = 1}^m \mu_i p_i + \sum_{j = 1}^M \nu_j p_j'$ with $\lambda_i, \mu_i ,\nu_i > 0$. Since each $p_j'$ is in the negative octant of $B$, the rightmost sum contributes non-positively to the coefficient of each $p_i$, so we have that $m = n$. If $\sigma \neq \Cone(p_1, \ldots, p_n)$, then $\sigma$ contains both $p_1'$ and $-p_1'$, which contradicts its strict convexity. The result follows.
\end{proof}

We may now state the two characterisations that we will implement in the core method of our \textit{Macaulay2} package (see Subsection \ref{subsection:is-additive}).

\begin{corollary}\label{corollary:complete-additive}
    The following are equivalent:
    \begin{enumerate}[1.]
        \item $X_{\Sigma}$ is additive.
        \item There exists a maximal cone $\sigma \in \Sigma$ such that the primitive vectors on the extremal rays of $\sigma_P$ form a basis $B = \{p_1, \ldots, p_n\}$ of the lattice $N$ and the remaining rays in $\Sigma_P(1)$ are contained in the negative octant of $B$.
    \end{enumerate}
\end{corollary}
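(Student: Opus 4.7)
The plan is to obtain this as an immediate consequence of Proposition \ref{proposition:negative-octant-criterion} together with the proposition immediately preceding the corollary, which strengthens the former by showing that the distinguished rays $\rho_1,\ldots,\rho_n$ may be taken to be the extremal rays of a maximal cone. So the main work is really a bookkeeping exercise that packages two already-established facts into a single clean criterion.

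For the direction (2) $\Rightarrow$ (1), I would simply note that if $\sigma\in\Sigma$ is a maximal cone whose extremal rays $\rho_1,\ldots,\rho_n$ have primitive vectors forming a basis $B$ of $N$, and all other rays of $\Sigma(1)$ lie in the negative octant of $B$, then the hypothesis (2) of Proposition \ref{proposition:negative-octant-criterion} is satisfied verbatim (taking these same $\rho_i$). Hence $X_\Sigma$ is additive.

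For the direction (1) $\Rightarrow$ (2), I would apply Proposition \ref{proposition:negative-octant-criterion} to produce rays $\rho_1,\ldots,\rho_n\in\Sigma(1)$ such that $B=\{p_1,\ldots,p_n\}$ is a basis of $N$ and all remaining rays are contained in the negative octant of $B$. The preceding proposition then upgrades this list of rays to the extremal rays of some maximal cone $\sigma\in\Sigma$, giving the desired maximal cone.

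The main (minor) obstacle is purely notational: the statement of the corollary writes $\sigma_P$ and $\Sigma_P$ where $\sigma$ and $\Sigma$ are clearly intended, so I would silently interpret these as $\sigma$ and $\Sigma$ when writing up the proof. Beyond that, I expect no genuine difficulty, and the proof should be two or three lines, essentially saying \emph{``combine Proposition \ref{proposition:negative-octant-criterion} with the previous proposition''}.
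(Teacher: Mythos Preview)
Your proposal is correct and matches the paper's approach exactly: the corollary is stated without proof in the paper precisely because it is the immediate combination of Proposition~\ref{proposition:negative-octant-criterion} with the proposition just before it, which is exactly the two-step argument you describe. Your observation about the typos $\sigma_P$ and $\Sigma_P$ (which should read $\sigma$ and $\Sigma$) is also correct.
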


\begin{corollary}\label{corollary:complete-uniquely-additive}
    The following are equivalent:
    \begin{enumerate}[1.]
        \item $X_{\Sigma}$ is uniquely additive.
        \item There exists a maximal cone $\sigma \in \Sigma$ such that the primitive vectors on the extremal rays of $\sigma_P$ form a basis $B = \{p_1, \ldots, p_n\}$ of the lattice $N$, the remaining rays in $\Sigma_P(1)$ are contained in the negative octant of $B$, and $\mathfrak{R}_i = \{-p_i^{\vee}\}$ for all $i = 1, \ldots, n$.
    \end{enumerate}
\end{corollary}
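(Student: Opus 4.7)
The plan is to derive this corollary as a direct consequence of Theorem \ref{theorem:uniqueness} combined with the proposition just proved (about $\rho_1, \ldots, \rho_n$ being the extremal rays of a maximal cone). The structure is exactly parallel to how Corollary \ref{corollary:complete-additive} should follow from Proposition \ref{proposition:negative-octant-criterion}, with the extra condition $\mathfrak{R}_i = \{-p_i^\vee\}$ carried along unchanged on both sides.

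For the forward direction (1 $\Rightarrow$ 2), I would start from the hypothesis that $X_\Sigma$ is uniquely additive. Theorem \ref{theorem:uniqueness} then supplies rays $\rho_1, \ldots, \rho_n \in \Sigma(1)$ whose primitive vectors form a lattice basis $B$ of $N$, with the remaining rays of $\Sigma(1)$ in the negative octant of $B$, and with $\mathfrak{R}_i = \{-p_i^\vee\}$ for every $i$. Applying the preceding proposition to these rays promotes them to the set of extremal rays of a single maximal cone $\sigma \in \Sigma$. Since the conditions on $B$ and on the $\mathfrak{R}_i$ are preserved verbatim, condition 2 follows.

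For the backward direction (2 $\Rightarrow$ 1), I would simply observe that the hypothesis of condition 2 is strictly stronger than condition 2 of Theorem \ref{theorem:uniqueness}: once one has a maximal cone $\sigma$ whose extremal rays yield a basis $B$ of $N$, the other rays lying in the negative octant, and $\mathfrak{R}_i = \{-p_i^\vee\}$ for all $i$, one forgets the cone structure and retains exactly the bulleted hypotheses of Theorem \ref{theorem:uniqueness}, which concludes that $X_\Sigma$ is uniquely additive.

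There is essentially no obstacle: this corollary is a bookkeeping step, packaging Theorem \ref{theorem:uniqueness} together with the cone-extremality proposition into a form that is algorithmically convenient for the \emph{Macaulay2} implementation described in Subsection \ref{subsection:is-additive}. The only minor care needed is to interpret the notation $\sigma_P$ and $\Sigma_P(1)$ appearing in the statement as $\sigma$ and $\Sigma(1)$ (consistent with Corollary \ref{corollary:complete-additive}), and to note that the condition $\mathfrak{R}_i = \{-p_i^\vee\}$ is a property of the global fan $\Sigma$ that does not interact with the cone refinement, so it transports across both directions without modification.
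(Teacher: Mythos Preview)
Your proposal is correct and matches the paper's intended approach: the corollary is stated without proof precisely because it is the immediate combination of Theorem~\ref{theorem:uniqueness} with the preceding proposition on maximal cones, exactly as you describe. Your remark about reading $\sigma_P$ and $\Sigma_P(1)$ as $\sigma$ and $\Sigma(1)$ is also apt, since the context here is a general complete fan rather than the normal fan of a polytope.
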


\subsection{Projective toric varieties}\hfill

We refer the reader to \cite{CoxLittSche2011Toric} for details about the interplay between polytopes and projective toric varieties.

From now until the end of the paper, a polytope $P \subset M_{\QQ}$ is a convex lattice polyhedron that is bounded, $n$-dimensional and very ample. We denote by $\Sigma_P$ its (complete) normal fan in $N_{\QQ}$ and by $X_P$ its corresponding projective toric variety. We also denote by $\mathcal{V}(P)$, $\mathcal{E}(P)$ and $\mathcal{F}(P)$ its set of vertices, edges and facets, respectively.

\begin{definition}\label{definition:inscribed-in-a-rectangle}
A polytope $P \subset M_{\QQ}$ is \textbf{inscribed in a rectangle} if there exists a vertex $v_0 \in \mathcal{V}(P)$ such that:
\begin{enumerate}[1.]
    \item The primitive lattice vectors on the edges $E_i \in \mathcal{E}(P)$ of $P$ starting at $v_0$ form a basis $e_1, \ldots, e_n$ of $M$.
    \item $\langle p_F, e_i \rangle \leq 0$ for all $i = 1, \ldots, n$ and $\rho_F \in \Sigma(1)$ corresponding to a facet $F \in \mathcal{F}(P)$ such that $v_0 \notin F$.
\end{enumerate}
\end{definition}

 \begin{figure}[H]
\centering
\begin{tikzpicture}[tdplot_main_coords,scale=0.5,line join=bevel]
\coordinate (A1) at (-1,2,-2) ;
\coordinate (A2) at (-1,0,0) ;
\coordinate (A3) at (-1,2,1) ;
\coordinate (A4) at (-1,0,1) ;
\coordinate (A5) at (2,-1,-2) ;
\coordinate (A6) at (0,-1,0) ;
\coordinate (A7) at (2,-1,1) ;
\coordinate (A8) at (0,-1,1) ;

\fill  (A1)  circle[radius=4pt] node[right] {$v_0$};

\draw (A8) -- (A6) -- (A2) -- (A4) -- cycle  ;
\draw (A5) -- (A7) -- (A8) -- (A6) -- cycle  ;
\draw (A2) -- (A4) -- (A3) -- (A1) -- cycle  ;
\draw (A5) -- (A6) -- (A2) -- (A1) -- cycle  ;
\draw (A5) -- (A7) -- (A3) -- (A1) -- cycle  ;
\draw (A7) -- (A3) -- (A4) -- (A8) -- cycle  ;
\draw [fill opacity=0.7,fill=black!80!white] (A5) -- (A7) -- (A3) -- (A1) -- cycle  ;
\draw [fill opacity=0.7,fill=black!80!white] (A7) -- (A3) -- (A4) -- (A8) -- cycle  ;
\end{tikzpicture}
\caption{The polytope corresponding to $\mathbb{P}(\mathcal{O}_{\mathbb{P}^1\times \mathbb{P}^1}\oplus \mathcal{O}_{\mathbb{P}^1\times \mathbb{P}^1}(1,1))$.} \end{figure}
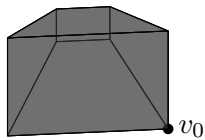

\begin{proposition}
    Let $P \subset M_{\QQ}$ be a polytope. For a vertex $v \in \mathcal{V}(P)$, let $C_v$ be the cone generated by the subset $(P \cap M) - v \subset M_{\QQ}$ and let $\sigma_v = (C_v)^{\vee}$. Then:
    \begin{enumerate}[1.]
        \item For a vertex $v \in \mathcal{V}(P)$, the set of rays parallel to the edges in $\mathcal{E}(P)$ starting at $v$ is equal to the set of extremal rays of $C_v$.
        \item There is a one-to-one correspondence between vertices in $\mathcal{V}(P)$ and maximal cones in $\Sigma$ mapping $v \in \mathcal{V}(P)$ to $\sigma_v \in \Sigma_P$.
    \end{enumerate}
\end{proposition}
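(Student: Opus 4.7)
The plan is to treat both items as consequences of standard polytope theory, with the only real work being the identification of $C_v$ with the tangent cone of $P$ at $v$. After translating so that $v$ is the origin, I would compare $C_v = \Cone((P \cap M) - v)$ with the tangent cone $T_v \coloneq \Cone(P - v)$. The inclusion $C_v \subseteq T_v$ is automatic from $(P \cap M) - v \subseteq P - v$. For the opposite inclusion, I would use the well-known fact that $T_v$ is generated, as a rational cone, by the primitive lattice vectors along the edges of $P$ starting at $v$. Since every such edge $E$ terminates at another vertex $v' \in \mathcal{V}(P) \subseteq P \cap M$, the vector $v' - v$ is a positive multiple of the primitive direction along $E$ and lies in $(P \cap M) - v$; hence every edge direction lies in $C_v$ and $T_v \subseteq C_v$. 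Consequently $C_v = T_v$, and its extremal rays are exactly the rays spanned by the primitive edge vectors at $v$, proving (1).

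For (2), I would invoke the usual correspondence between faces of $P$ and cones of the normal fan $\Sigma_P$, which restricts to a bijection between vertices of $P$ and maximal cones of $\Sigma_P$, sending $v$ to its normal cone $\{n \in N_{\QQ} : \langle n, w - v \rangle \leq 0 \text{ for all } w \in P\}$. Unwinding definitions, this normal cone equals $\{n \in N_{\QQ} : \langle n, u \rangle \leq 0 \text{ for all } u \in T_v\}$, which by part (1) is $(C_v)^{\vee}$ with the sign convention appropriate to the chosen normal-fan orientation. This identifies the maximal cone associated to $v$ with $\sigma_v$ as defined in the statement, and the bijection is the required one.

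The only subtlety is confirming that the cone generated by $(P \cap M) - v$ really equals the full tangent cone. One might worry about the lattice points $P \cap M$ failing to span the corner at $v$, which is a genuine concern for lattice polytopes that are not very ample at finer semigroup level; however, to reach every edge ray as a rational ray we only need the adjacent vertices of $P$, which are automatically lattice points for any lattice polytope, so neither very ampleness nor any saturation hypothesis is actually needed here. Beyond that, the proof is bookkeeping: one should make sure the sign convention in the definition of $(\cdot)^{\vee}$ matches the convention for the normal cones of $\Sigma_P$.
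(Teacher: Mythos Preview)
Your argument is correct and is essentially the explicit version of what the paper does: the paper's proof of (1) simply says ``this follows from the definition of $C_v$'' and for (2) cites \cite[Propositions 2.3.7, 2.3.8]{CoxLittSche2011Toric}, whereas you unpack both steps by identifying $C_v$ with the tangent cone $T_v$ via the edge-vertex observation and then invoking the standard vertex/maximal-cone bijection for the normal fan. Your remark that very ampleness is not actually needed for (1) is accurate and goes slightly beyond what the paper records; the only cosmetic issue is the sign in your description of the normal cone (it should read $\langle n, w-v\rangle \geq 0$ under the inner-normal-fan convention used in \cite{CoxLittSche2011Toric}), which you already flag as a convention check.
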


\begin{proof}
    \begin{enumerate}[1.]
        \item This follows from the definition of $C_v$.
        \item This follows from \cite[Proposition 2.3.7, Proposition 2.3.8]{CoxLittSche2011Toric}.
    \end{enumerate}
\end{proof}

\begin{theorem}\cite[Theorem 4]{ArzhRoma2017Additive}\label{theorem:inscribed-in-a-rectangle}
Let $P \subset M_{\QQ}$ be a polytope. The following are equivalent:
\begin{enumerate}[1.]
    \item $X_P$ is additive.
    \item $P$ is inscribed in a rectangle.
\end{enumerate}
\end{theorem}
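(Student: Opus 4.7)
The plan is to deduce this theorem from Corollary \ref{corollary:complete-additive} via the bijection $v \leftrightarrow \sigma_v$ between vertices of $P$ and maximal cones of the normal fan $\Sigma_P$ given by the previous proposition. The goal is to show that, under this correspondence, the two conditions in Definition \ref{definition:inscribed-in-a-rectangle} for a vertex $v_0$ translate exactly into the basis condition and the negative octant condition of Corollary \ref{corollary:complete-additive} for the maximal cone $\sigma_{v_0}$.

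To carry out the translation, I would fix $v_0 \in \mathcal{V}(P)$ and denote by $F_1, \dots, F_n$ the facets of $P$ containing $v_0$. By the previous proposition, the extremal rays of $\sigma_{v_0}$ are generated by the primitive inward facet normals $p_{F_1}, \dots, p_{F_n}$, and the remaining rays in $\Sigma_P(1)$ correspond exactly to the facets of $P$ that do not contain $v_0$. Assuming Condition 1 of Definition \ref{definition:inscribed-in-a-rectangle}, the vertex $v_0$ must be simple; the edge $E_i$ starting at $v_0$ then lies in every facet at $v_0$ except one, which I label $F_i$, so $\langle p_{F_j}, e_i\rangle = 0$ for $j \neq i$, while $\langle p_{F_i}, e_i\rangle$ is a positive integer (since $e_i$ is primitive and points into $P$, and $p_{F_i}$ is the primitive inward normal). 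A short primitivity argument then shows that $\{e_1, \dots, e_n\}$ is a $\mathbb{Z}$-basis of $M$ if and only if $\{p_{F_1}, \dots, p_{F_n}\}$ is a $\mathbb{Z}$-basis of $N$, in which case they are dual bases and $\langle p_{F_i}, e_j\rangle = \delta_{ij}$. This identifies Condition 1 with the basis condition of Corollary \ref{corollary:complete-additive}.

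Assuming this common basis condition, for any facet $F$ with $v_0 \notin F$ I would expand the primitive inward normal $p_F$ in the basis $B = \{p_{F_1}, \dots, p_{F_n}\}$ using the dual basis relation, obtaining $p_F = \sum_{i=1}^n \langle p_F, e_i\rangle\, p_{F_i}$. Hence $p_F$ lies in the negative octant of $B$ if and only if $\langle p_F, e_i\rangle \leq 0$ for every $i$, which is exactly Condition 2 of Definition \ref{definition:inscribed-in-a-rectangle}. Combining the two equivalences with Corollary \ref{corollary:complete-additive} yields the theorem. The main technical point is the primitivity argument ensuring that, once either $\{e_i\}$ or $\{p_{F_i}\}$ is a $\mathbb{Z}$-basis, the pairings $\langle p_{F_i}, e_i\rangle$ are forced to equal $1$ rather than some other positive integer; the rest is a bookkeeping exercise with the vertex-to-maximal-cone correspondence.
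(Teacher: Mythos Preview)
The paper does not give its own proof of this theorem: it is quoted as \cite[Theorem~4]{ArzhRoma2017Additive} without argument. Your deduction from Corollary~\ref{corollary:complete-additive} via the vertex--maximal-cone bijection is correct and is exactly the argument the paper's organization invites: the paper establishes Corollary~\ref{corollary:complete-additive}, records the bijection $v\leftrightarrow\sigma_v$ in the preceding proposition, and then states the theorem. You are filling in the intended translation.

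Two small points worth tightening. First, your labeling of the facets $F_1,\dots,F_n$ through $v_0$ presupposes that $v_0$ is simple; you justify simplicity from Condition~1 (exactly $n$ edges, hence a simplex vertex figure), but for the reverse implication you should note that if the primitive generators of $\sigma_{v_0}$ form a basis of $N$ then $\sigma_{v_0}$ is a smooth simplicial cone, so $C_{v_0}=\sigma_{v_0}^{\vee}$ is also simplicial and $v_0$ is simple---after which your labeling and the primitivity argument go through symmetrically. Second, the ``short primitivity argument'' is indeed short: once $\langle p_{F_j},e_i\rangle=c_i\delta_{ij}$ with $c_i>0$, primitivity of $p_{F_i}$ in $N$ (expressed in the basis dual to $\{e_i\}$) forces $c_i=1$, and the same reasoning with roles reversed gives the converse. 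With these two clarifications your proof is complete.
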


\subsection{Smooth Fano toric varieties}\hfill

\begin{definition}
    Let $X$ be an algebraic variety over $\KK$.
    \begin{enumerate}[1.]
        \item Assume $X$ is normal and complete. $X$ is \textbf{Gorenstein Fano} if the anticanonical divisor $-K_X$ is Cartier and ample (in particular, $X$ is projective).
        \item Assume $X$ is normal and complete. $X$ is \textbf{smooth Fano} if it is Gorenstein Fano and smooth.
    \end{enumerate}
\end{definition}

\begin{definition}
    Let $P$ be a polytope in $M_{\QQ}$ or $N_{\QQ}$.
    \begin{enumerate}[1.]
        \item $P$ is \textbf{smooth} if, for every vertex $v \in \mathcal{V}(P)$, the primitive lattice vectors on the edges $e \in \mathcal{E}(P)$ starting at $v$ form a basis of the lattice.
        \item Assume $0 \in \RelInt(P)$. $P$ is \textbf{reflexive} if its dual $P^{\vee}$ is also a lattice polytope.
        \item Assume $0 \in \RelInt(P)$. $P$ is \textbf{smooth Fano} if the vertices of every facet form a basis of the lattice.
    \end{enumerate}
\end{definition}

\begin{proposition}\cite[Exercise 8.3.6 (a), (b)]{CoxLittSche2011Toric}
    Let $P \subset N_{\QQ}$ be a polytope such that $0_{N_{\QQ}} \in \RelInt(P)$. If $P$ is smooth Fano, then $P$ is reflexive and $P^{\vee}$ is smooth and reflexive. 
\end{proposition}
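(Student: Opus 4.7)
The plan is to prove $P$ is reflexive by writing down the vertices of $P^{\vee}$ explicitly, obtain reflexivity of $P^{\vee}$ from the involution $(P^{\vee})^{\vee} = P$, and then verify smoothness of $P^{\vee}$ through a vertex-edge calculation under the face-lattice duality between $P$ and $P^{\vee}$.

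First I would fix a facet $F$ of $P$. By the smooth Fano hypothesis, the vertices $v_1, \ldots, v_n$ of $F$ form a $\ZZ$-basis of $N$; let $v_1^{\vee}, \ldots, v_n^{\vee} \in M$ denote the dual basis. The vertex of $P^{\vee}$ dual to $F$ is the unique $u \in M_{\QQ}$ satisfying $\langle v_i, u \rangle = -1$ for all $i$, namely $u_F = -\sum_{i=1}^n v_i^{\vee} \in M$. Ranging over all facets exhibits every vertex of $P^{\vee}$ as a lattice point, so $P^{\vee}$ is a lattice polytope and $P$ is reflexive. Reflexivity of $P^{\vee}$ is then immediate, since $0 \in \RelInt(P^{\vee})$ and $(P^{\vee})^{\vee} = P$ is itself a lattice polytope.

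For smoothness of $P^{\vee}$, I would show that the primitive vectors on the edges of $P^{\vee}$ at $u_F$ are precisely $v_1^{\vee}, \ldots, v_n^{\vee}$. Under the face-lattice anti-isomorphism (available now that both polytopes are reflexive), edges at $u_F$ correspond to codimension-two faces of $P$ contained in $F$; smoothness forces $F$ to be an $(n-1)$-simplex, so these arise by omitting one vertex $v_i$, giving $n$ edges. For the edge attached to $v_i$, the other endpoint $u_{F'}$ satisfies $\langle v_j, u_{F'} \rangle = -1$ for $j \ne i$, so $u_{F'} - u_F \in \QQ \cdot v_i^{\vee}$; the outward direction is $+v_i^{\vee}$ since $\langle v_i, u_F + t v_i^{\vee} \rangle = -1 + t$ moves into $P^{\vee}$ for $t > 0$, and primitivity is automatic because $v_i^{\vee}$ belongs to a $\ZZ$-basis of $M$. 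The $v_i^{\vee}$ form a basis of $M$, so $P^{\vee}$ is smooth.

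The step I expect to be the main obstacle is the edge-direction calculation—in particular, correctly identifying the sign of the outward primitive vector and ensuring that the $n$ edges at $u_F$ really carry $n$ distinct directions. This relies crucially on $P$ being simplicial (a consequence of smoothness), so that each ridge of $P$ is contained in exactly two facets and the face-lattice anti-isomorphism is unambiguous; the rest is routine linear algebra with the dual basis.
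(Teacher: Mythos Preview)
The paper does not supply its own proof of this proposition: it is stated with a bare citation to Exercise~8.3.6 in Cox--Little--Schenck and no proof environment follows. So there is nothing in the paper to compare your argument against.

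That said, your proof is correct and is essentially the intended solution to the cited exercise. The computation $u_F = -\sum_i v_i^{\vee}$ is exactly the reason reflexivity holds, and your edge-direction analysis at $u_F$ via the face-lattice anti-isomorphism is the standard way to verify smoothness of $P^{\vee}$. One small remark: you invoke the face-lattice duality ``available now that both polytopes are reflexive,'' but in fact this anti-isomorphism holds for any full-dimensional polytope with the origin in its interior, so you could appeal to it immediately without first establishing reflexivity. This does not affect the validity of the argument.
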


\begin{theorem}
    Up to isomorphism on both sides:
    \begin{enumerate}[1.]
        \item There is a one-to-one correspondence between pairs $(X_P, D_P)$, where $X_P$ is a smooth projective toric variety and $D_P$ is a torus-invariant Cartier divisor on $X_P$, and smooth polytopes $P \subset M_{\QQ}$.
        \item There is a one-to-one correspondence between Gorenstein Fano toric varieties $X_P$ and reflexive polytopes $P \subset M_{\QQ}$. Here, the torus-invariant Cartier divisor on $X_P$ is $-K_{X_P}$.
        \item There is a one-to-one correspondence between smooth Fano toric varieties $X_{P^{\vee}}$ and smooth Fano polytopes $P \subset N_{\QQ}$. Here, $P^{\vee} \subset M_{\QQ}$ is the dual polytope of $P$ and the torus-invariant Cartier divisor on $X_{P^{\vee}}$ is $-K_{X_{P^{\vee}}}$.
    \end{enumerate}
\end{theorem}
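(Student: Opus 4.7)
The plan is to assemble the three statements from standard ingredients in toric geometry, most of which are collected in \cite{CoxLittSche2011Toric}, and then to glue (1) and (2) together to obtain (3).

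First I would establish (1). The starting point is the classical correspondence (e.g.\ \cite[Theorem 6.2.1]{CoxLittSche2011Toric}), which assigns to a lattice polytope $P \subset M_\QQ$ that is $n$-dimensional and very ample the pair $(X_P, D_P)$, where $X_P$ is the projective toric variety of the normal fan $\Sigma_P$ and $D_P$ is the torus-invariant ample Cartier divisor whose lattice points in the polytope of global sections are exactly $P \cap M$. Conversely, given $(X_P, D_P)$ with $D_P$ torus-invariant and ample, the polytope $P_{D_P} \coloneq \{m \in M_\QQ : \langle p_\rho, m \rangle \geq -a_\rho \text{ for all } \rho \in \Sigma(1)\}$ recovers $P$ (after possibly translating so that $P$ sits in $M_\QQ$). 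To upgrade this to the smooth statement, I would invoke the smoothness criterion for toric varieties: $X_{\Sigma}$ is smooth if and only if every maximal cone is generated by a part of a basis of $N$. Under the normal-fan construction, the maximal cones of $\Sigma_P$ are precisely the duals of the cones $C_v$ at the vertices of $P$, whose extremal rays are parallel to the edges of $P$ emanating from $v$. Dualising, the condition that the primitive vectors on the edges at each $v$ form a basis of $M$ is equivalent to the extremal rays of each $\sigma_v$ forming a basis of $N$. This is precisely the definition of smoothness of $P$.

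Next I would handle (2). Using the same correspondence, one identifies $D_P$ with $-K_{X_P}$, that is, with the sum $\sum_{\rho \in \Sigma(1)} D_\rho$ of the torus-invariant prime divisors. The associated polytope is $P_{-K_{X_P}} = \{m \in M_\QQ : \langle p_\rho, m\rangle \geq -1 \text{ for all } \rho\}$. Saying that $-K_{X_P}$ is Cartier is equivalent to the existence, for every maximal cone, of an $m_\sigma \in M$ such that $\langle p_\rho, m_\sigma\rangle = -1$ on the rays of $\sigma$; combined with ampleness, this forces every facet of $P_{-K_{X_P}}$ to lie on a hyperplane $\langle p_\rho, \cdot\rangle = -1$ for a primitive $p_\rho \in N$, and hence that the dual polytope $(P_{-K_{X_P}})^\vee \subset N_\QQ$ is a lattice polytope. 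This is reflexivity of $P_{-K_{X_P}}$, translated so that $0 \in \RelInt(P_{-K_{X_P}})$. Conversely, reflexivity plus $0 \in \RelInt(P)$ gives a well-defined ample Cartier anticanonical divisor.

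For (3), I would combine (1) and (2): $X_P$ is smooth Fano iff $P \subset M_\QQ$ is smooth \emph{and} reflexive. By the proposition preceding the theorem, when $P^\vee \subset N_\QQ$ is smooth Fano, $P \coloneq (P^\vee)^\vee \subset M_\QQ$ is smooth and reflexive, and the toric variety $X_{P^\vee}$ is canonically identified with $X_P$ via the identity of their normal fans. This provides the bijection between smooth Fano polytopes in $N_\QQ$ and smooth Fano toric varieties, with $-K_{X_{P^\vee}}$ realised by the anticanonical divisor from (2).

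The main obstacle is purely bookkeeping: one must be scrupulous about when one is working on the $M$-side versus the $N$-side, about translations so that $0 \in \RelInt(P)$, and about the convention that smooth Fano polytopes are defined by the basis property of facets (in $N_\QQ$) rather than by the edge property at vertices (in $M_\QQ$). Once these dualities are unwound, every step reduces to a direct citation of the relevant result in \cite{CoxLittSche2011Toric}.
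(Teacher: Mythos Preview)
Your proposal is correct and takes essentially the same approach as the paper: both reduce the three assertions to standard results in \cite{CoxLittSche2011Toric}, with part (3) obtained by combining smoothness and reflexivity via the proposition on duals of smooth Fano polytopes. The paper's proof is in fact nothing more than three bare citations (to Theorem 2.4.3, Theorem 8.3.4, and Exercise 8.3.6 of \cite{CoxLittSche2011Toric}), whereas you have unpacked the content of those references; but the underlying route is identical.
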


\begin{proof}
    \begin{enumerate}[1.]
        \item This is \cite[Theorem 2.4.3]{CoxLittSche2011Toric}
        \item This is \cite[Theorem 8.3.4]{CoxLittSche2011Toric}
        \item This follows from \cite[Exercise 8.3.6 (b), (c)]{CoxLittSche2011Toric}.
    \end{enumerate}
\end{proof}

\begin{remark}\label{remark:smooth-fano-toric-varieties}
    A potential source of confusion is that, in \cite{CoxLittSche2011Toric}, ``smooth Fano'' polytopes and varieties are simply called ``Fano''. We follow the more common convention by calling them ``smooth Fano''.
    
    Note that smooth Fano polytopes are not necessarily smooth, but their duals are.
    
    Note also that smoothness implies condition 1. in Definition \ref{definition:inscribed-in-a-rectangle} at all vertices $v_0 \in \mathcal{V}(P)$. In particular, for each vertex $v_0 \in \mathcal{V}(P)$, the primitive vectors on the extremal rays of the corresponding maximal cone $\sigma \in \Sigma$ form a basis of the lattice $N$.

    Finally, note that Demazure roots of normal fans $\Sigma_P$ of reflexive polytopes $P \subset M_{\QQ}$ correspond to points in the relative interior of the facets in $\mathcal{F}(P)$.
\end{remark}

It is well-known that there are finitely many isomorphism classes of reflexive polytopes, then also of Gorenstein Fano toric varieties. They were classified algorithmically by Kreuzer and Skarke \cite{Kreuzer2004}.

Smooth Fano toric surfaces are easily classified by inspecting the corresponding reflexive polygons.

Smooth Fano threefolds were classified by Mori and Mukai \cite{mori2003classification}. Smooth Fano toric threefolds were classified by K. Watanabe and M. Watanabe \cite{WataWata1982Classification}, and independently by Batyrev \cite{Baty1981Toric}, who also classified smooth Fano toric fourfolds \cite{Baty1999Classification} along with Sato \cite{Sato2000Toward}.

Higher-dimensional smooth Fano toric varieties were all classified algorithmically by Øbro \cite{obro2007algorithm}. The data up to dimension $6$ is available on the GRDB \cite{gavin2009about}.

\begin{remark}
    The sequence $(a_n)_{n \geq 1}$, where $a_n$ is the number of isomorphism classes of reflexive polytopes, starts with $1, 16, 4319, 473800776, \ldots$

    On the other hand, the sequence $(b_n)_{n \geq 1}$, where $b_n$ is the number of isomorphism classes of smooth Fano polytopes, starts with $1, 5, 18, 124, 866, 7622, \ldots$
    
    Since $a_n$ grows much faster than $b_n$, it is much more feasible to determine existence and uniqueness of additive actions on higher-dimensional smooth Fano toric varieties than it is on Gorenstein Fano toric varieties. 
\end{remark}

\subsection{Betti numbers}\hfill

We now recall and give identities involving invariants of complete toric varieties with \emph{simplicial} fans. Further specializations will allow us to match our results on smooth Fano toric varieties with existing classifications.

In this subsection we assume that $P \subset N_{\QQ}$ is simplicial and that $0_{N_{\QQ}} \in \RelInt(P)$.

\begin{definition}
For $i = 0, \ldots, 2n$, let $H^i(X_{P^{\vee}}, \mathbb{Q})$ be the $i$-th rational cohomology group of $X_{P^{\vee}}$. The $i$-th \textbf{Betti number} of $X_{P^{\vee}}$ is $b_i := \dim(H^{i}(X_{P^{\vee}}, \mathbb{Q}))$ (in particular, $b_2$ is the rank of the Picard group when $X_{P^{\vee}}$ is smooth). For $p = 0, \ldots, n$, define $h_p := b_{2p}$.

The $i$-th \textbf{face number} $f_i$ of $P$ is its number of faces of dimension $i$ (in particular, $f_0 = \# \mathcal{V}(P)$ and $f_{n - 1} = \# \mathcal{F}(P)$). We also set $f_{-1} = 1$.

Finally, we say a Betti number of $X_{P^{\vee}}$ is \textbf{constant} if it is equal to an expression which does not depend on any face number.
\end{definition}

\begin{proposition}\label{prop:poincare-duality}
\begin{enumerate}[1.]
    \item For each $p = 0, \ldots, n$, $h_p = h_{n - p}$.
    \item If $i = 0, \ldots, 2n$ is odd, then $b_i = 0$.
\end{enumerate}
\end{proposition}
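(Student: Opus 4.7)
The plan is to reduce both statements to classical structural facts about the rational cohomology of complete simplicial toric varieties, as exposed in \cite[Chapter 12]{CoxLittSche2011Toric}. Under the standing hypotheses of the subsection, $P \subset N_\QQ$ is simplicial and contains the origin in its relative interior, so its dual $P^\vee \subset M_\QQ$ is a simple polytope and the normal fan $\Sigma_{P^\vee}$ is a complete simplicial fan in $N_\QQ$. Consequently $X_{P^\vee}$ is a complete $\QQ$-factorial toric variety of complex dimension $n$, and only the two general facts below about such varieties are needed.

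For part 2, I would invoke the decomposition of $X_{P^\vee}$ into torus-invariant affine cells. Choosing a generic one-parameter subgroup $\lambda : \GG_m \to \TT$, the Bia\l{}ynicki--Birula decomposition (equivalently, the standard filtration by orbit closures) expresses $X_{P^\vee}$ as a disjoint union of locally closed subvarieties, one for each cone $\sigma \in \Sigma_{P^\vee}$, each isomorphic to $\Affine^{n - \dim \sigma}$ and hence of even real dimension. The associated cellular spectral sequence degenerates for parity reasons, so $H^*(X_{P^\vee}, \QQ)$ is concentrated in even degrees and $b_i = 0$ whenever $i$ is odd.

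For part 1, I would use that a complete $\QQ$-factorial toric variety is an orbifold, since its singularities are finite quotient singularities coming from the simplicial cones of $\Sigma_{P^\vee}$. In particular $X_{P^\vee}$ is a rational homology manifold of real dimension $2n$, so Poincar\'e duality with $\QQ$-coefficients gives
\[
H^i(X_{P^\vee}, \QQ) \cong H^{2n - i}(X_{P^\vee}, \QQ) \qquad \text{for all } i = 0, \ldots, 2n.
\]
Setting $i = 2p$ yields $h_p = b_{2p} = b_{2(n-p)} = h_{n-p}$, as desired.

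There is no real obstacle here; both ingredients are standard. The only small verification is that simpliciality of the polytope $P$ yields simpliciality of the fan $\Sigma_{P^\vee}$, which is immediate from the duality between the face lattices of $P$ and $P^\vee$ and the correspondence between vertices of $P^\vee$ and maximal cones of $\Sigma_{P^\vee}$.
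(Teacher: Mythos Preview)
Your overall strategy coincides with the content behind the paper's one-line citation to \cite[Section~5]{Fult1993Introduction}: rational Poincar\'e duality for orbifolds handles part~1, and a paving by even-dimensional affine cells handles part~2. Part~1 is correct as written.

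In part~2, however, your description of the decomposition is not right, and as stated the argument does not go through. The orbit decomposition does have one stratum per cone $\sigma \in \Sigma_{P^\vee}$, but that stratum is the torus $(\KK^\times)^{n-\dim\sigma}$, not $\Affine^{n-\dim\sigma}$; tori have nonzero odd rational cohomology, so the parity argument fails for this filtration. The Bia\l{}ynicki--Birula decomposition (equivalently, the shelling argument in Fulton) does yield a paving by affine spaces, but its cells are indexed by the torus-fixed points, i.e.\ by the \emph{maximal} cones only, and their dimensions are governed by the chosen generic one-parameter subgroup (they record the $h$-vector of the fan), not by $n-\dim\sigma$, which would be identically zero. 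Once you replace your sentence by the correct description---affine cells, one per maximal cone, of varying complex dimension---the spectral sequence degenerates for the parity reason you indicate and the conclusion follows.
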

\begin{proof}
    \begin{enumerate}[1.]
        \item This follows from Poincaré duality \cite[Section 5]{Fult1993Introduction}.
        \item This is the fact that $H^i(X_{P^{\vee}}, \mathbb{Q})$ is trivial if $i$ is odd \cite[Section 5]{Fult1993Introduction}.
    \end{enumerate}
\end{proof}

\begin{proposition}\label{prop:formula-betti}
For each $p = 0, \ldots, n$,
\begin{equation*}
    h_p = b_{2p} = \sum_{i = p}^d (-1)^{i - p} \binom{i}{p} f_{d - i - 1}.
\end{equation*}
In particular:
\begin{enumerate}[1.]
    \item The constant even Betti numbers of $X_{P^{\vee}}$ are $h_0 = b_0 = \sum_{i = 0}^n (-1)^i f_{n - i - 1} = h_n = b_{2n} = 1$.
    \item $h_1 = b_2 = \sum_{i = 1}^n (-1)^{i - 1} i f_{n - i - 1} = h_{n - 1} = b_{2n - 2} = f_0 - n = \# \mathcal{V}(P) - n$.
\end{enumerate}
\end{proposition}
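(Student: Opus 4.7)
The plan is to identify $b_{2p}(X_{P^{\vee}})$ with the $p$-th entry of the $h$-vector of the simplicial sphere $\partial P$ via the classical Danilov--Jurkiewicz theorem, and then convert that expression into the stated form using the Poincar\'e duality already recorded in Proposition \ref{prop:poincare-duality}.

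First I would set up the combinatorial dictionary. Since $P \subset N_{\QQ}$ is simplicial with $0 \in \RelInt(P)$, the fan of $X_{P^{\vee}}$ is the face fan of $P$: its $i$-dimensional cones are in bijection with the $(i-1)$-dimensional faces of $P$, so the fan is complete and simplicial, and the number of $i$-dimensional cones is $f_{i-1}$. The Danilov--Jurkiewicz theorem (\cite[Theorem 12.4.4]{CoxLittSche2011Toric} or \cite[\S 5.2]{Fult1993Introduction}) then gives $b_{2p}(X_{P^{\vee}}) = h_p$, where the $h$-vector of $\partial P$ is
\[
h_p = \sum_{j=0}^{p} (-1)^{p-j}\binom{n-j}{p-j}\, f_{j-1}.
\]
Proposition \ref{prop:poincare-duality}(1) gives the Dehn--Sommerville relation $h_p = h_{n-p}$; substituting $n-p$ for $p$ in the displayed formula, reindexing via $i = n-j$, and using $\binom{i}{i-p} = \binom{i}{p}$ transforms it into
\[
h_p = h_{n-p} = \sum_{i=p}^{n} (-1)^{i-p}\binom{i}{p}\, f_{n-i-1},
\]
which is the claimed identity (with the index variable $d$ in the statement understood as $n$).

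The two specializations follow by substitution. For $p = 0$, reindexing $j = n-i-1$ in $\sum_{i=0}^{n}(-1)^{i}f_{n-i-1}$ gives $(-1)^{n-1}\sum_{j=-1}^{n-1}(-1)^{j}f_{j}$; since $\partial P \cong S^{n-1}$ has Euler characteristic $1+(-1)^{n-1}$, the inner sum equals $-1 + \chi(\partial P) = (-1)^{n-1}$, so the whole expression collapses to $1 = h_0 = h_n$. For $p = 1$, the original $h$-vector formula directly yields $h_1 = -\binom{n}{1}f_{-1} + f_0 = f_0 - n = \#\mathcal{V}(P) - n$, and $h_1 = h_{n-1}$ by Poincar\'e duality. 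The only substantive input is the Danilov--Jurkiewicz identification $b_{2p} = h_p$; everything after it reduces to elementary bookkeeping with binomial coefficients together with a single application of $\chi(S^{n-1}) = 1+(-1)^{n-1}$.
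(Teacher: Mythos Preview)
Your proof is correct and amounts to unpacking the citation that constitutes the paper's entire argument (the paper writes only ``This follows from \cite[Section 5]{Fult1993Introduction}''). Your route---starting from the standard $h$-vector form $h_p=\sum_{j=0}^{p}(-1)^{p-j}\binom{n-j}{p-j}f_{j-1}$ and then reindexing via Dehn--Sommerville---is sound, and the two specializations for $p=0$ and $p=1$ are handled correctly.
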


\begin{proof}
    This follows from \cite[Section 5]{Fult1993Introduction}.
\end{proof}

\section{Toric varieties with Picard number equal to two}\label{section:picard-two}

Let us recall the classification of smooth complete toric varieties $X$ with Picard rank $\rho(X) = 2$ due to Kleinschmidt (compare with \cite[Theorem 7.3.7]{CoxLittSche2011Toric}).

\begin{theorem}\cite[Theorem 1]{Klein1988Classification}\label{theorem:kleinschmidt} Let $X$ be a smooth complete toric variety of dimension $n \geq 2$ with $\rho(X)=2$. Then there are positive integers $r,s\geq 1$ such that $r+s=n$ and that
\[
X\cong \mathbb{P}(\mathcal{O}_{\mathbb{P}^s}\oplus \mathcal{O}_{\mathbb{P}^s}(a_1)\oplus \cdots \oplus \mathcal{O}_{\mathbb{P}^s}(a_r)),
\]
where $0\leq a_1\leq \cdots \leq a_r$.
\end{theorem}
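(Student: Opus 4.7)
The plan is to follow Kleinschmidt's original combinatorial strategy, which recognizes the fan $\Sigma$ of $X$ as that of a projective bundle over $\mathbb{P}^s$ through a careful analysis of its primitive collections.

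First, I would establish the basic numerics. Since $X$ is smooth and complete, the standard short exact sequence
\[
0 \longrightarrow M \longrightarrow \bigoplus_{\rho \in \Sigma(1)} \mathbb{Z}\, D_{\rho} \longrightarrow \Pic(X) \longrightarrow 0
\]
forces $\#\Sigma(1) = n + \rho(X) = n + 2$, and smoothness makes $\Sigma$ simplicial so that every maximal cone is generated by $n$ of these $n+2$ rays.

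Second, I would invoke Batyrev's theory of primitive collections: a \emph{primitive collection} is a minimal set $P \subseteq \Sigma(1)$ whose rays do not span a cone of $\Sigma$, and its associated primitive relation yields a class generating an extremal ray of the Mori cone $\overline{NE}(X)$. Since $\rho(X) = 2$, this cone is two-dimensional and hence has exactly two extremal rays, giving exactly two primitive collections $P_1$ and $P_2$. A short argument using completeness and simpliciality then shows that $P_1 \cap P_2 = \emptyset$ and $P_1 \cup P_2 = \Sigma(1)$; setting $\#P_1 = r+1$ and $\#P_2 = s+1$ yields $r + s = n$.

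Third, I would analyze the two primitive relations. Writing $P_1 = \{u_0, \ldots, u_r\}$, smoothness of the maximal cones containing the subsets of $P_1$ of size $r$ forces a primitive relation $u_0 + \cdots + u_r = 0$ within the rank-$r$ sublattice they span, so the corresponding subfan is a copy of the fan of $\mathbb{P}^r$. Writing $P_2 = \{v_0, \ldots, v_s\}$, the remaining smoothness conditions together with the integrality of the second primitive relation force it to take the form $v_0 + \cdots + v_s = a_1 u_{i_1} + \cdots + a_r u_{i_r}$ for non-negative integers $a_1, \ldots, a_r$. After reordering, these become the $a_1 \leq \cdots \leq a_r$ of the statement, and matching the resulting fan against the explicit description of the fan of $\mathbb{P}(\mathcal{O}_{\mathbb{P}^s} \oplus \mathcal{O}_{\mathbb{P}^s}(a_1) \oplus \cdots \oplus \mathcal{O}_{\mathbb{P}^s}(a_r))$ in \cite[Example 7.3.5]{CoxLittSche2011Toric} concludes the proof.

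The main obstacle is the disjointness and covering claim in the second step: while the existence of exactly two primitive collections is immediate from $\rho(X) = 2$, showing that they actually partition $\Sigma(1)$, and that the coefficients in the second primitive relation can be made non-negative, requires a delicate interplay between completeness, simpliciality, and the smoothness condition on every maximal cone. Once the partition is in hand, the remainder of the proof reduces to a change-of-basis normalization.
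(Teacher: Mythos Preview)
The paper does not give its own proof of this theorem: it is simply quoted from \cite{Klein1988Classification} (with a pointer to \cite[Theorem 7.3.7]{CoxLittSche2011Toric}) and then used as input to Proposition~\ref{proposition:picard-two}. There is therefore nothing in the paper to compare your proposal against.

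On the substance of your sketch: the outline via primitive collections is the right modern approach, but two points deserve correction. First, this is not ``Kleinschmidt's original combinatorial strategy''---primitive collections were introduced by Batyrev in 1991, after Kleinschmidt's 1988 paper; what you describe is essentially the proof in \cite[Theorem 7.3.7]{CoxLittSche2011Toric}. Second, and more seriously, your second step appeals to the Mori cone $\overline{NE}(X)$ and its extremal rays to count primitive collections, but the hypothesis is only that $X$ is \emph{complete}, not projective. Projectivity is in fact part of the conclusion of Kleinschmidt's theorem, so you cannot assume the cone theorem or Mori theory at this stage without circularity. The argument in \cite{CoxLittSche2011Toric} avoids this by establishing the partition $\Sigma(1) = P_1 \sqcup P_2$ through a purely combinatorial wall-crossing analysis of the simplicial fan, and only afterwards reads off that the resulting $X$ is a projective bundle (hence projective). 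Your ``main obstacle'' paragraph correctly flags the partition claim as the delicate point, but the route you propose to reach it needs to be replaced by something that does not presuppose projectivity.
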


The main result of this section is the following:

\begin{proposition}\label{proposition:picard-two}
Let $X$ be a smooth complete toric variety of dimension $n \geq 2$ with $\rho(X)~=~2$. Then, $X$ is additive. Furthermore, $X$ is uniquely additive if and only if $X \cong \PP^1 \times \PP^1$.
\end{proposition}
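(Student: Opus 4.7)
The plan is to invoke Kleinschmidt's theorem (Theorem~\ref{theorem:kleinschmidt}) to realize $X$ concretely as the toric variety whose fan in $N=\ZZ^{s+r}$ (with basis $e_1,\ldots,e_s,f_1,\ldots,f_r$) has rays
\[
u_i = e_i \ (1\leq i\leq s), \quad u_0 = -\sum_{i=1}^s e_i + \sum_{j=1}^r a_j f_j, \quad v_j = f_j \ (1\leq j\leq r), \quad v_0 = -\sum_{j=1}^r f_j,
\]
whose $(s+1)(r+1)$ maximal cones are obtained by choosing $s$ of the $u_i$'s together with $r$ of the $v_j$'s. Both parts of the proposition will be deduced from the combinatorial criteria of Corollaries~\ref{corollary:complete-additive} and~\ref{corollary:complete-uniquely-additive}.

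For additivity I would take the basis $B = \{u_0,u_1,\ldots,u_{s-1},v_0,v_1,\ldots,v_{r-1}\}$ coming from the maximal cone that omits $u_s$ and $v_r$. A short change-of-basis computation expresses the two omitted rays as
\[
v_r = -v_0 - \sum_{j=1}^{r-1} v_j \quad\text{and}\quad u_s = -\sum_{i=0}^{s-1} u_i - a_r v_0 + \sum_{j=1}^{r-1}(a_j - a_r)v_j,
\]
with all coefficients non-positive thanks to $0\leq a_j\leq a_r$, which is the hypothesis of Corollary~\ref{corollary:complete-additive}.

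For unique additivity the easy direction is to check, when $r=s=1$ and $a_1=0$, that the basis $\{-e_1,-f_1\}$ yields $\mathfrak{R}_{-e_1}=\{e_1^*\}$ and $\mathfrak{R}_{-f_1}=\{f_1^*\}$ by direct enumeration, so Corollary~\ref{corollary:complete-uniquely-additive} gives that $\PP^1\times\PP^1$ is uniquely additive. For the converse the key input is the enumeration
\[
|\mathfrak{R}_{v_0}| \;=\; \sum_{k=1}^{r}\binom{a_k+s}{s},
\]
which comes from writing $m\in\mathfrak{R}_{v_0}$ in the dual basis and observing that $\langle v_0,m\rangle=-1$ forces a unique coordinate $m_{s+k}=1$ while $\langle u_0,m\rangle\geq 0$ forces $\sum_i m_i\leq a_k$ with each $m_i\geq 0$. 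I then split into two cases. If $a_r\geq 1$: inspecting each of the $(s+1)(r+1)$ maximal cones shows that in every candidate basis excluding $v_0$ some $a_j>0$ appears as a positive coefficient in the expansion of the omitted $u$-ray, so every valid basis must contain $v_0$; combined with $|\mathfrak{R}_{v_0}|\geq\binom{a_r+s}{s}\geq s+1\geq 2$ this kills the singleton condition. If $a_r=0$, then $X\cong\PP^s\times\PP^r$, and a symmetric computation yields $|\mathfrak{R}_{u_i}|=s$ and $|\mathfrak{R}_{v_j}|=r$ for every ray; since each maximal cone uses $s$ of the $u_i$'s and $r$ of the $v_j$'s, the singleton condition forces $r=s=1$.

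The main obstacle I expect is the case analysis inside the converse, namely proving that when $a_r\geq 1$ every maximal cone certifying additivity must contain $v_0$: this requires carefully inspecting coefficient signs across all $(s+1)(r+1)$ candidate bases. Everything else reduces to linear algebra and elementary counts of lattice points in simplices.
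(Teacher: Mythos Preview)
Your argument is correct, and it takes a genuinely different route from the paper's. Both start from Kleinschmidt's theorem, but the paper establishes additivity by exhibiting an explicit complete collection of Demazure roots and invoking Theorem~\ref{theorem:complete-collections}, whereas you use the negative-octant criterion of Corollary~\ref{corollary:complete-additive}; these are essentially dual computations. The real divergence is in the uniqueness analysis. For the non-product case the paper simply exhibits two elements $e_r^\vee,\,e_r^\vee+e_{r+1}^\vee\in\mathfrak{R}_{\rho_{r+1}}$ and appeals to Theorem~\ref{theorem:uniqueness}, while for $X\cong\PP^s\times\PP^r$ it cites Hassett--Tschinkel \cite{HassTsch1999Geometry}. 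Your approach instead computes $|\mathfrak{R}_{v_0}|=\sum_k\binom{a_k+s}{s}$ and, via the maximal-cone case analysis, forces any basis certifying additivity to contain $v_0$ when $a_r\geq1$; in the product case you compute all $|\mathfrak{R}_{u_i}|=s$, $|\mathfrak{R}_{v_j}|=r$ directly. Your argument is longer but fully self-contained within the toric framework of the paper, avoiding the external reference for $\PP^s\times\PP^r$; the paper's proof is considerably shorter precisely because it outsources that case. The case analysis you flag as the main obstacle is indeed routine: in any maximal cone omitting $v_0$ one has $v_1,\ldots,v_r$ present, and the coefficient of $v_r$ in the omitted $u$-ray is always $a_r>0$, so the verification is only two sub-cases.
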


\begin{proof}
Let $s,r\geq 1$ and $0\leq a_1\leq \cdots \leq a_r$ be integers such that
\[
X\cong \mathbb{P}(\mathcal{O}_{\mathbb{P}^s}\oplus \mathcal{O}_{\mathbb{P}^s}(a_1)\oplus \cdots \oplus \mathcal{O}_{\mathbb{P}^s}(a_r)),
\]
where $n=r+s$. It follows from \cite[\S 2]{Klein1988Classification} (compare with \cite[Example 7.3.5]{CoxLittSche2011Toric}) that $X$ is the toric variety corresponding to a fan $\Sigma \subseteq N_\mathbb{R}\cong \mathbb{R}^n$ whose rays $\rho_1,\ldots,\rho_{n+2}$ are generated by the following $n+2$ primitive lattice vectors:
\begin{itemize}
\item $u_i:=e_i$ for $i=1,\ldots,r$,
\item $u_{r+1}:=-\sum_{i=1}^r e_i$,
\item $u_{r+j+1}:=e_{r+j}$ for $j=1,\ldots,s$,
\item $u_{n+2}:=\sum_{i=1}^r a_i e_i - \sum_{j=1}^s e_{r+j}$,
\end{itemize}
where $e_i$ is the $i$-th vector of the canonical basis of $\mathbb{R}^n$. Let $e_1^{\vee},\ldots,e_n^{\vee}\in M$ be the basis of $M$ that is dual to $e_1,\ldots,e_n\in N$, and define
\begin{itemize}
\item $m_i:=e_r^{\vee} - e_i^{\vee}$ for $i=1,\ldots,r-1$,
\item $m_{r+1}:=e_r^{\vee}$,
\item $m_{r+1+j}:=-e_{r+j}^{\vee}$ for $j=1,\ldots,s$.
\end{itemize} 
A direct computation shows that the set $\{m_1,\ldots,m_{r-1},m_{r+1},m_{r+2},\ldots,m_{n+1}\}$ is a complete collection of Demazure roots for the fan $\Sigma$ of $X$. More precisely, we have that $m_i\in \mathfrak{R}_{\rho_i}$ for every $i=1,\ldots,r-1,r+1,\ldots,n+1$ and that \footnote{Note that we only require $\langle u_i,m_j\rangle \geq 0$ for $i=r$ and $i=n+2$.}
\[
\langle u_i,m_j\rangle = - \delta_{ij} \textup{ for all }i,j=1,\ldots,r-1,r+1,\ldots,n+1.
\]
It follows from Theorem \ref{theorem:complete-collections} that $X$ is additive.

Furthermore, if there exists $a_i \neq 0$, then $\{e_r^{\vee}, e_r^{\vee} + e_{r + 1}^{\vee}\} \subseteq \mathfrak{R}_{\rho_{r + 1}}$ so, by Theorem \ref{theorem:uniqueness}, $X$ is not uniquely additive. If all $a_i = 0$, then $X \cong \PP^r \times \PP^s$, which is uniquely additive if $r = s = 1$ and admits at least two non-isomorphic additive actions for all other values of $r, s$ by \cite{HassTsch1999Geometry}.
\end{proof}

\begin{remark}
In dimension $3$ or higher, the assumption $\rho(X)=2$ in Proposition \ref{proposition:picard-two} cannot be removed. Indeed, the smooth Fano toric threefold $\operatorname{III}_{25}\cong \mathbb{P}(\mathcal{O}_{\mathbb{P}^1\times \mathbb{P}^1}(1,0)\oplus \mathcal{O}_{\mathbb{P}^1\times \mathbb{P}^1}(0,1))$ is not additive, since $\operatorname{III}_{25}$ is also obtained as the blow-up of $\mathbb{P}^3$ along two disjoint lines and it follows from \cite[Proposition 3.3]{HassTsch1999Geometry} that the invariant subvarieties for all the four additive actions on $\mathbb{P}^3$ are contained in the boundary projective plane.
\end{remark}

The following well-known fact will be useful for computations. It follows in greater generality from Proposition \ref{prop:formula-betti} 2.

\begin{proposition}\label{proposition:picard-number-smooth-complete-toric-variety}
    Let $X_{\Sigma}$ be a smooth complete toric variety. Then, $\rho(X) = \# \Sigma(1) - n$.
\end{proposition}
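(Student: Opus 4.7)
The plan is to deduce the equality from the standard toric divisor class exact sequence
\[
0 \longrightarrow M \xrightarrow{\ \varphi\ } \bigoplus_{\rho \in \Sigma(1)} \ZZ\, D_\rho \longrightarrow \operatorname{Cl}(X_\Sigma) \longrightarrow 0,
\]
where $\operatorname{Cl}(X_\Sigma)$ denotes the divisor class group and $\varphi(m) = \sum_{\rho \in \Sigma(1)} \langle p_\rho, m\rangle\, D_\rho$ records the principal divisor $\operatorname{div}(\chi^m)$ (see \cite[Theorem 4.1.3]{CoxLittSche2011Toric}). Once this sequence is in place, the proof reduces to a rank count assembled from three standard ingredients.

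First, I would verify injectivity of $\varphi$. Since $\Sigma$ is complete, the primitive generators $\{p_\rho\}_{\rho\in\Sigma(1)}$ span $N_\QQ$, so the condition $\langle p_\rho, m\rangle = 0$ for every $\rho$ forces $m = 0$. Next, I would use smoothness to identify $\operatorname{Cl}(X_\Sigma)$ with $\Pic(X_\Sigma)$: in each maximal cone of a smooth fan the primitive ray generators form a lattice basis of $N$, and the corresponding dual basis in $M$ provides local characters realizing every torus-invariant Weil divisor as Cartier, so that $\operatorname{Cl}(X_\Sigma) = \Pic(X_\Sigma)$ (see \cite[Proposition 4.2.6]{CoxLittSche2011Toric}). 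Finally, taking ranks of the exact sequence yields $\#\Sigma(1) = n + \rho(X)$ and hence $\rho(X) = \#\Sigma(1) - n$.

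There is no substantive obstacle, since all three inputs are standard consequences of the toric dictionary in \cite{CoxLittSche2011Toric}. As the preceding remark suggests, an alternative route proceeds via Proposition \ref{prop:formula-betti}~2: smoothness forces $\Sigma$ to be simplicial, Poincaré duality together with the vanishing of odd Betti numbers identifies $\rho(X)$ with $b_2$, and Proposition \ref{prop:formula-betti}~2 then computes $b_2 = \#\Sigma(1) - n$ after translating the face-number convention of the polytope into the ray count of its normal fan.
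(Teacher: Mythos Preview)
Your proof via the toric divisor class exact sequence is correct. The paper does not prove the proposition separately; it simply remarks, in the sentence preceding it, that the statement follows from Proposition~\ref{prop:formula-betti}, part~2 --- exactly the alternative route you sketch at the end. Your primary argument is thus genuinely different: it bypasses singular cohomology and face-number combinatorics entirely, relying only on the exact sequence from \cite[Theorem~4.1.3]{CoxLittSche2011Toric} together with the identification $\operatorname{Cl}=\Pic$ for smooth toric varieties, and a rank count. This is more elementary and applies directly to an arbitrary smooth complete $X_\Sigma$ without passing through a polytope description. The paper's route through Betti numbers is natural in its context because those same formulas are reused in the face-number computations of Section~\ref{section:applications}.
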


\subsection{Projective bundles over Hirzebruch surfaces}\hfill

Let $d \geq 0$ be a non-negative integer. Let us recall that the fan of the Hirzebruch surface $\Sigma_d=\mathbb{P}(\mathcal{O}_{\mathbb{P}^1}\oplus \mathcal{O}_{\mathbb{P}^1}(d))$ is given by

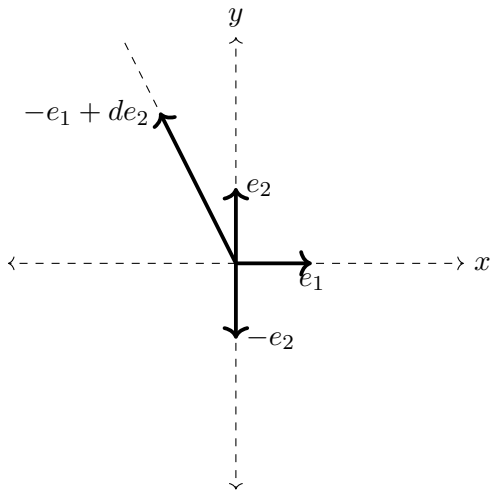
\begin{figure}[ht]
\begin{center}
\begin{tikzpicture}
\draw [dashed,->]  (0,0) -- (3,0);   
\draw [dashed,->]  (0,0) -- (-3,0);   
\node [right] at (3,0) {$x$};
\draw [dashed,->]  (0,0) -- (0,3); 
\draw [dashed,->]  (0,0) -- (0,-3); 
\node [above] at (0,3) {$y$}; 

\draw[->, line width=0.5mm] (0,0) -- (1,0);
\draw[dashed] (0,0) -- (-1.5,3);
\draw[->, line width=0.5mm] (0,0) -- (0,1);
\draw[->, line width=0.5mm] (0,0) -- (0,-1);
\draw[->, line width=0.5mm] (0,0) -- (-1,2);

\node [right] at (0,-1) {$-e_2$};
\node [left] at (-1,2) {$-e_1+de_2$};
\node [below] at (1,0) {$e_1$};
\node [right] at (0,1) {$e_2$};
\end{tikzpicture} 
\end{center}
\caption{The fan of the Hirzebruch surface $\Sigma_d$.}\label{fig:1}
\end{figure}

It follows from \cite[Proposition 7.3.3]{CoxLittSche2011Toric}, that if we consider $D\sim aF+b\xi$ with $a,b\in \mathbb{Z}$, then
\[
\Sigma(d;a,b):=\mathbb{P}(\mathcal{O}_{\Sigma_d}\oplus \mathcal{O}_{\Sigma_d}(aF+b\xi))
\]
is a smooth toric threefold, whose fan is given by

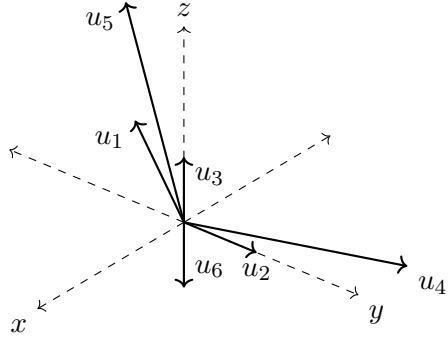
\begin{figure}[ht]
\centering
\begin{tikzpicture}[tdplot_main_coords,scale=0.5,line join=bevel]

\draw[dashed,->] (0,0,0) -- (6,0,0) node[anchor=north east]{$x$};
\draw[dashed,->] (0,0,0) -- (-6,0,0);
\draw[dashed,->] (0,0,0) -- (0,6,0) node[anchor=north west]{$y$};
\draw[dashed,->] (0,0,0) -- (0,-6,0);
\draw[dashed,->] (0,0,0) -- (0,0,6) node[anchor=south]{$z$};

\draw[thick,->] (0,0,0) -- (2,0,4) node[anchor=north east]{$u_1$};
\draw[thick,->] (0,0,0) -- (0,2.5,0) node[anchor=north]{$u_2$};
\draw[thick,->] (0,0,0) -- (0,0,2) node[anchor=north west]{$u_3$};
\draw[thick,->] (0,0,0) -- (-2,6,0) node[anchor=north west]{$u_4$};
\draw[thick,->] (0,0,0) -- (0,-2,6) node[anchor=north east]{$u_5$};
\draw[thick,->] (0,0,0) -- (0,0,-2) node[anchor=south west]{$u_6$};

\end{tikzpicture}
\caption{The fan of $\Sigma(d;a,b)$.} \label{fig:P10}
\end{figure}

Here, the primitive generators of the rays of the fan of $\Sigma(d;a,b)$ are given by
\[
u_1=(1,0,a),\;\; u_2=(0,1,0),\;\; u_3=(0,0,1),\;\; u_4=(-1,d,0),\;\; u_5=(0,-1,b),\;\; u_6=(0,0,-1).
\]
Moreover, by twisting by $\mathcal{O}_{\Sigma_d}(-D)$ if necessary, we may assume that $a\leq b$.

\vspace{2mm}

In order to apply the results in \cite{ArzhRoma2017Additive} to determine whether $\Sigma(d;a,b)$ is additive or not, we will first compute the set of Demazure roots associated to each ray of the fan. More precisely, for $i=1,\ldots,6$, consider
\[
\mathfrak{R}_i:=\{m=(p,q,r)\in \mathbb{Z}^3\;|\;\langle u_i,m\rangle =-1 \textup{ and }\langle u_j,m\rangle \geq 0 \textup{ for all }j\neq i\}.
\]
Then:
\begin{enumerate}[1.]
\item $\mathfrak{R}_1=\{(-1,0,0)\}$.
\item $\mathfrak{R}_2=\{(0,-1,0)\}$.
\item $\mathfrak{R}_3=\{m=(p,q,-1)\in \mathbb{Z}^3\;|\;p\geq a,\;q\geq 0,\;qd\geq p,\;-b\geq q\}$. In particular, $\mathfrak{R}_3\neq \varnothing$ if and only if $b\leq 0$.
\item $\mathfrak{R}_4=\{(1,0,0)\}$.
\item $\mathfrak{R}_5=\{(0,1,0)\}$.
\item $\mathfrak{R}_6=\{m=(p,q,1)\in \mathbb{Z}^3\;|\;p\geq -a,\;q\geq 0,\;qd\geq p,\;b\geq q\}$. In particular, $\mathfrak{R}_6\neq \varnothing$ if and only if $a+bd\geq 0$.
\end{enumerate}

\begin{proposition}\label{proposition:hirzebruch}
Let $d\geq 0$ and $a\leq b$ be integers. Then, the toric threefold
\[
\Sigma(d;a,b):=\mathbb{P}(\mathcal{O}_{\Sigma_d}\oplus \mathcal{O}_{\Sigma_d}(aF+b\xi))
\]
is additive if and only if $b\leq 0$ or $a+bd\geq 0$.
\end{proposition}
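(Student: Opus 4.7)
The plan is to apply Theorem \ref{theorem:complete-collections}, so that additivity of $X_{\Sigma(d;a,b)}$ reduces to exhibiting (or ruling out) a complete collection of three Demazure roots. From the explicit descriptions of $\mathfrak{R}_1,\ldots,\mathfrak{R}_6$ listed before the proposition, I read off two non-emptiness characterizations that hold under the WLOG $a \leq b$: the set $\mathfrak{R}_3$ is non-empty precisely when $b \leq 0$ (since $a \leq b \leq 0$ and $d \geq 0$ automatically give $a + bd \leq 0$), and $\mathfrak{R}_6$ is non-empty precisely when $a + bd \geq 0$ (since $b < 0$ together with $a \leq b$ would force $a + bd \leq a < 0$, so $a + bd \geq 0$ also implies $b \geq 0$).

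The key preparatory observation is that the home rays of any complete collection must contain exactly one index from each of the pairs $\{1, 4\}$, $\{2, 5\}$, $\{3, 6\}$. Indeed, a direct inspection of the $\mathfrak{R}_i$'s shows that $\langle u_j, e\rangle = 1$ whenever $e \in \mathfrak{R}_i$ with $(i, j)$ one of these pairs (or its reverse), preventing both $\rho_i$ and $\rho_j$ from simultaneously being home rays; three home rays out of six then force exactly one from each pair.

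For the forward direction, if $b \leq 0$ I would verify that the triple $\{(-1, 0, 0), (0, 1, 0), (a, -b, -1)\}$, with home rays $(\rho_1, \rho_5, \rho_3)$, is a complete collection: membership in $\mathfrak{R}_1, \mathfrak{R}_5, \mathfrak{R}_3$ respectively is routine from the descriptions together with the automatic inequality $a + bd \leq 0$, and the pairing matrix $(\langle u_{i_j}, e_k\rangle)$ evaluates to $-I_3$ by direct computation. If instead $a + bd \geq 0$ (hence $b \geq 0$ by the observation above), the analogous triple $\{(-1, 0, 0), (0, 1, 0), (-a, b, 1)\}$ with home rays $(\rho_1, \rho_5, \rho_6)$ works, with membership of the third root in $\mathfrak{R}_6$ following directly from $a + bd \geq 0$ and $b \geq 0$.

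For the converse, if $b > 0$ and $a + bd < 0$, then both $\mathfrak{R}_3$ and $\mathfrak{R}_6$ are empty by the characterizations above. The pair constraint then prevents any choice of three home rays (one of them would have to come from $\{3, 6\}$), so no complete collection exists and $X_{\Sigma(d;a,b)}$ is not additive. The main obstacle will be cleanly establishing the pair constraint: it requires a small case analysis on the pairings $\langle u_j, e\rangle$ for $e$ in each of the six root sets, rather than deep machinery, but must be stated precisely to support the reverse direction.
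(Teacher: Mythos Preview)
Your proposal is correct and follows essentially the same approach as the paper: you exhibit the identical complete collections $\{(-1,0,0),(0,1,0),(a,-b,-1)\}$ and $\{(-1,0,0),(0,1,0),(-a,b,1)\}$ for the two cases, and then argue non-additivity when $\mathfrak{R}_3=\mathfrak{R}_6=\varnothing$. The only cosmetic difference is in the converse: the paper simply observes that the remaining roots all lie in the hyperplane $\{r=0\}$ and hence cannot contain three linearly independent vectors, whereas you phrase this via the pair constraint on home rays; both arguments are immediate from the explicit root sets and yield the same conclusion.
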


\begin{proof}
We first remark that $\mathfrak{R}_3\neq \varnothing$ and $\mathfrak{R}_6 \neq \varnothing$ if and only if $a\leq b \leq 0$ and $0\leq (-b)d\leq a$, which implies that $a=b=0$. On the other hand, $\Sigma(d;0,0)\cong \Sigma_d\times \mathbb{P}^1$ is additive.

Assume first that $\mathfrak{R}_3\neq \varnothing$ and $\mathfrak{R}_6=\varnothing$. In this case, we obtain a complete collection of Demazure roots for the fan of $\Sigma(d;a,b)$ by considering
\[
m_1=(-1,0,0)\in \mathfrak{R}_1,\;m_2=(0,1,0)\in \mathfrak{R}_5,\;m_3=(a,-b,-1)\in \mathfrak{R}_3.
\]
Secondly, we assume that $\mathfrak{R}_6\neq \varnothing$ and $\mathfrak{R}_3=\varnothing$. In this case, we obtain a complete collection of Demazure roots for the fan of $\Sigma(d;a,b)$ by considering
\[
m_1=(-1,0,0)\in \mathfrak{R}_1,\;m_2=(0,1,0)\in \mathfrak{R}_5,\;m_3=(-a,b,1)\in \mathfrak{R}_6.
\]
Therefore, in both cases we conclude from Theorem \ref{theorem:complete-collections} that $\Sigma(d;a,b)$ is additive.

Finally, we note that if $\mathfrak{R}_3$ and $\mathfrak{R}_6$ are empty, then we have at most two linearly independent Demazure roots, and hence in that case $\Sigma(d;a,b)$ is not additive.
\end{proof} 

\section{Package description}\label{section:package-description}

We introduce \texttt{AdditiveToricVarieties}, a new \emph{Macaulay2} \cite{macaulay2} package for working with additive actions on complete toric varieties, with methods based on the results we have discussed so far. We have followed package writing guidelines and standards described in the \emph{Macaulay2} official website. In particular, we have included documentation and examples for each method, which are automatically
generated as HTML files upon installation. We have also implemented automated tests which may be run at any time.

The package also includes a pre-processed database which classifies smooth Fano toric varieties $X$ of dimension at most $6$ as additive (or not) and uniquely additive (or not), as per the database included in the \texttt{NormalToricVarieties} package.

The package dependencies are packages \texttt{NormalToricVarieties} and \texttt{Polyhedra}. The exported methods are \texttt{demazureRoots}, \texttt{isAdditive}, \texttt{listAdditiveSmoothFanoToricVarieties} and \texttt{listUniquelyAdditiveSmoothFanoToricVarieties}.

The package and installation instructions may be found in the first author's \emph{GitHub} repository: \url{https://github.com/iZafiro/AdditiveToricVarieties}.

\subsection{demazureRoots}\hfill

This method takes as input a complete fan $\Sigma$ (an object of type \texttt{Fan}) and optionally a primitive lattice vector $p_i$ on a ray $\rho_i \in \Sigma(1)$ (an object of type \texttt{Matrix}). If only a fan $\Sigma$ is given, it returns a hash table with keys the lattice vectors on all rays in $\Sigma(1)$ (objects of type \texttt{List}) and values the corresponding sets $\mathfrak{R}_i$ (objects of type \texttt{List}). If additionally a vector $p_i$ is given, it only returns the corresponding set $\mathfrak{R}_i$ (an object of type \texttt{List}).

\begin{example}
The projective plane has six Demazure roots:
\begin{verbatim}
i1 : F = fan toricProjectiveSpace 2;
i2 : demazureRootsF = demazureRoots F;
i3 : flatten values demazureRootsF
o3 = {{0, -1}, {1, -1}, {-1, 1}, {-1, 0}, {0, 1}, {1, 0}}
o3 : List
\end{verbatim}

There are two roots corresponding to \texttt{e1}:
\begin{verbatim}
i4 : e1 = {1, 0};
i5 : demazureRootsF#e1
o5 = {{-1, 1}, {-1, 0}}
o5 : List
\end{verbatim}

Alternatively, this can also be computed directly:
\begin{verbatim}
i6 : demazureRoots(F, e1)
o6 = {{-1, 1}, {-1, 0}}
o6 : List
\end{verbatim}
\end{example}

\subsection{additiveActions}\label{subsection:is-additive}\hfill

This method takes as input either a complete fan $\Sigma$ (an object of type \texttt{Fan}), a complete toric variety $X$ (an object of type \texttt{NormalToricVariety}), a polytope $P \subset M_{\QQ}$ (an object of type \texttt{Polyhedron}) or a pair $(n, i)$ with $n = 1, \ldots, 6$ and $i$ an index in the database of smooth Fano toric varieties included in the \texttt{NormalToricVarieties} package.

The method may also take as optional inputs \texttt{getCompleteCollection} (a \texttt{Boolean}, whether the output must include a complete collection of Demazure roots), \texttt{checkIfUniquelyAdditive} (a \texttt{Boolean}, whether the output must include if the variety is uniquely additive), \texttt{fanIsComplete} (a \texttt{Boolean}, whether the fan is known to be complete) and \texttt{fanIsSmooth} (a \texttt{Boolean}, whether the fan is known to be smooth).

The output is a hash table with keys \texttt{"isAdditive"} (a \texttt{Boolean}, whether the variety is additive), \texttt{"completeCollection"} (an object of type \texttt{List}, a complete collection of Demazure roots, if the variety is additive and \texttt{getCompleteCollection} is true) and \texttt{"isUniquelyAdditive"} (a \texttt{Boolean}, whether the variety is uniquely additive, if \texttt{checkIfUniquelyAdditive} is true).

This method is very fast if the input is a pair $(n, i)$, since then the data is fetched from the pre-processed database included in our package. It is also very fast if \texttt{getCompleteCollection} is false, the variety is smooth and the Picard number is equal to two, due to Proposition \ref{proposition:picard-two} and Proposition \ref{proposition:picard-number-smooth-complete-toric-variety}. Furthermore, setting \texttt{getCompleteCollection} or \texttt{checkIfUniquelyAdditive} to false, or \texttt{fanIsComplete} or \texttt{fanIsSmooth} to true or false (the former value more than the latter) may dramatically increase performance, especially if the variety is high-dimensional.

\begin{example}
The projective plane is additive but not uniquely additive:
\begin{verbatim}
i1 : PP2 = toricProjectiveSpace 2;
i2 : additiveActions PP2
o2 = HashTable{completeCollection => {{-1, 1}, {0, 1}}}
               isAdditive => true
               isUniquelyAdditive => false
o2 : HashTable
\end{verbatim}

The projective variety corresponding to the del Pezzo polygon (see Definition~\ref{defi:dP-polytope}) does not admit an additive action:
\begin{verbatim}
i3 : V = transpose matrix {{1, 0}, {0, 1}, {-1, 0}, {0, -1}, {1, 1}, {-1, -1}};
              2        6
o3 : Matrix ZZ  <--- ZZ
i4 : P = convexHull V;
i5 : additiveActions P
o5 = HashTable{completeCollection => {}   }
               isAdditive => false
               isUniquelyAdditive => false
o5 : HashTable
\end{verbatim}
\end{example}

\subsection{listAdditiveSmoothFanoToricVarieties}\label{subsection:list-additive}\hfill

This method takes as input an integer $n~=~1,~\ldots,~6$. It returns a
list containing hash tables with keys \texttt{"databaseIndex"} (the index in the database of smooth Fano toric varieties included in the \texttt{NormalToricVarieties} package), \texttt{"isUniquelyAdditive"} (a \texttt{Boolean}, whether the variety is uniquely additive) and \texttt{"completeCollection"} (an object of type \texttt{List}, a complete collection of Demazure roots). This method is very fast, as the data is fetched from the pre-processed database included in our package.

\begin{example}
The following is the two-dimensional case:
\begin{verbatim}
i1 : listAdditiveSmoothFanoToricVarieties 2
o1 = {HashTable{completeCollection => {{-1, 1}, {0, 1}}},
                databaseIndex => 0                       
                isUniquelyAdditive => false              
     ------------------------------------------------------------------------
     HashTable{completeCollection => {{-1, 0}, {0, -1}}},
               databaseIndex => 1                        
               isUniquelyAdditive => true                
     ------------------------------------------------------------------------
     HashTable{completeCollection => {{-1, 0}, {0, 1}}},
               databaseIndex => 2                       
               isUniquelyAdditive => false              
     ------------------------------------------------------------------------
     HashTable{completeCollection => {{-1, 0}, {0, 1}}}}
               databaseIndex => 3
               isUniquelyAdditive => true
o1 : List
\end{verbatim}

The smooth Fano toric surface with database index 4 is not additive:
\begin{verbatim}
i2 : additiveActions(2, 4)
o2 = HashTable{completeCollection => {}   }
               isAdditive => false
               isUniquelyAdditive => false
o2 : HashTable
\end{verbatim}

There are 470 additive smooth Fano toric fivefolds:
\begin{verbatim}
i3 : #(listAdditiveSmoothFanoToricVarieties 5)
o3 = 470
\end{verbatim}
\end{example}

\subsection{listUniquelyAdditiveSmoothFanoToricVarieties}\label{subsection:list-uniquely-additive}\hfill

This method takes as input an integer $n~=~1,~\ldots,~6$. It returns a
list containing hash tables with keys \texttt{"databaseIndex"} (the index in the database of smooth Fano toric varieties included in the \texttt{NormalToricVarieties} package) and \texttt{"completeCollection"} (an object of type \texttt{List}, a complete collection of Demazure roots). This method is very fast, as the data is fetched from the pre-processed database included in our package.

\begin{example}
The following is the two-dimensional case:
\begin{verbatim}
i1 : listUniquelyAdditiveSmoothFanoToricVarieties 2
o1 = {HashTable{completeCollection => {{-1, 0}, {0, -1}}},
                databaseIndex => 1                        
     ------------------------------------------------------------------------
     HashTable{completeCollection => {{-1, 0}, {0, 1}}}}
               databaseIndex => 3
o1 : List
\end{verbatim}

There are 4 uniquely additive smooth Fano toric fivefolds:
\begin{verbatim}
i2 : #(listUniquelyAdditiveSmoothFanoToricVarieties 5)
o2 = 4
\end{verbatim}
\end{example}

\section{Applications to smooth Fano toric varieties}\label{section:applications}

In this section we apply our methods to determine existence and uniqueness of additive actions on smooth Fano toric varieties $X$ of dimension $\dim(X) \leq 6$. Most data was obtained from the GRDB \cite{gavin2009about}, who in turn obtained it from Øbro \cite{obro2007algorithm}. The case in which $\dim(X) = 2$ can be easily done by inspecting the corresponding polygons, but we include it for the sake of completeness. Existence in the $\dim(X) = 3$ case was previously done in \cite{HuanMont2020Fano}, who used more geometric techniques. Uniqueness in the $\dim(X) = 3$ case, along with existence and uniqueness in all higher-dimensional cases, are new results.

\subsection{Surfaces}\hfill

\begin{proposition}\label{prop:d-2-cohomology}
Let $P \subset N_{\QQ}$ be a smooth Fano polygon. Then:
\begin{enumerate}[1.]
    \item The only non-constant Betti number of $X_{P^{\vee}}$ is 
    \[
    h_1 = b_2 = \rho(X_{P^{\vee}}) = \# \mathcal{V}(P) - 2.
    \]
    \item $\# \mathcal{V}(P) = \# \mathcal{F}(P)$.
\end{enumerate}
\end{proposition}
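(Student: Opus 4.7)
The plan is to specialise the general Betti number machinery of Proposition \ref{prop:formula-betti} to the two-dimensional case. Set $n=2$. Since $P$ is a polygon we have the three non-trivial face numbers $f_{-1}=1$, $f_0=\#\mathcal{V}(P)$ and $f_1=\#\mathcal{F}(P)$, and by Proposition \ref{prop:poincare-duality} the odd cohomology vanishes, so only the even Betti numbers $b_0,b_2,b_4$ need to be addressed.

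First I would compute $h_0=b_0$ directly from the formula in Proposition \ref{prop:formula-betti}:
\[
h_0 = \sum_{i=0}^{2}(-1)^{i}\binom{i}{0}f_{1-i} = f_1 - f_0 + f_{-1} = \#\mathcal{F}(P)-\#\mathcal{V}(P)+1.
\]
On the other hand, Proposition \ref{prop:formula-betti} already guarantees $h_0=1$, and this immediately yields $\#\mathcal{V}(P)=\#\mathcal{F}(P)$, which is part 2. (Combinatorially this is of course obvious, since the boundary of a convex polygon is a cycle of alternating vertices and edges, but deriving it from the Betti-number identity keeps the argument uniform with part 1.)

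For part 1, I would compute the remaining two even Betti numbers. The constancy of $h_0=b_0$ and, by Poincaré duality, $h_2=b_4=1$ is guaranteed by Proposition \ref{prop:formula-betti} and Proposition \ref{prop:poincare-duality}. The only remaining even Betti number is $h_1=b_2$, and applying the formula gives
\[
h_1 = \sum_{i=1}^{2}(-1)^{i-1}\binom{i}{1}f_{1-i} = f_0 - 2f_{-1} = \#\mathcal{V}(P)-2,
\]
which is manifestly non-constant and agrees with $\rho(X_{P^\vee})=\#\mathcal{V}(P)-2$ since $\rho(X_{P^\vee})=h_1$ for smooth toric varieties.

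I do not anticipate any genuine obstacle here; the argument is an index-chasing exercise in the formula of Proposition \ref{prop:formula-betti}. The only care needed is to keep track of the convention $f_{-1}=1$ and to remember that for a two-dimensional polytope the facets are exactly the edges, so $f_1=\#\mathcal{F}(P)$.
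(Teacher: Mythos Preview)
Your proposal is correct and follows essentially the same approach as the paper: both derive part 1 from Proposition~\ref{prop:poincare-duality} together with the $p=1$ case of Proposition~\ref{prop:formula-betti}, and part 2 by expanding the $p=0$ identity $h_0=1$ of Proposition~\ref{prop:formula-betti}. The only difference is that the paper's proof is a one-line citation while you have written out the arithmetic explicitly.
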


\begin{proof}
\begin{enumerate}[1.]
    \item This follows from Proposition \ref{prop:poincare-duality} and from expanding the identity in Proposition \ref{prop:formula-betti} for $p = 1$.
    \item This follows from expanding the identity in Proposition \ref{prop:formula-betti} for $p = 0, 2$.
\end{enumerate}
\end{proof}

There are 5 isomorphism classes of smooth Fano toric surfaces, of which 1 is not additive, 2 are additive but not uniquely additive, and 2 are uniquely additive.

Table \ref{table:matching-debarre-d-2} contains all (uniquely) additive smooth Fano toric surfaces. These are matched to the data in the GRDB \cite{gavin2009about}. Non-additive surfaces are omitted. The column labels indicate, in order:

\begin{enumerate}[1.]
    \item \textbf{ID}: The ID in the GRDB.
    \item \textbf{Degree}: The degree $(-K_X)^2$.
    \item $\bm{b_2}$: The second Betti number, calculated using Proposition \ref{prop:d-2-cohomology}.
    \item $\bm{\#\mathcal{V}(P), \#\mathcal{F}(P)}$: The number of vertices and facets of the smooth Fano polygon $P \subset N_{\QQ}$, respectively.
    \item \textbf{Variety}: The corresponding smooth Fano toric surface $X_{P^{\vee}}$.
    \item \textbf{Uniquely additive?}: \emph{Yes} if the variety is uniquely additive, \emph{no} if it is not.
\end{enumerate}

\begin{longtable}[ht]{|p{1cm}|p{2cm}|p{1cm}|p{1.5cm}|p{1.5cm}|p{2cm}|p{2cm}|}
    \hline
    \textbf{ID} & \textbf{Degree} & $\bm{b_2}$ & $\bm{\#\mathcal{V}(P)}$ & $\bm{\#\mathcal{F}(P)}$ & \textbf{Variety} & \textbf{Uniquely additive?} \\ \hline
    1 & 7 & 3 & 5 & 5 & $\Bl_{p, q} \left ( \PP^2 \right)$ & Yes  \\ \hline
    3 & 8 & 2 & 4 & 4 & $\Bl_{p}(\mathbb{P}^2)$ & No  \\ \hline
    4 & 8 & 2 & 4 & 4 & $\mathbb{P}^1 \times \mathbb{P}^1$ & Yes  \\ \hline
    5 & 9 & 1 & 3 & 3 & $\PP^2$ & No  \\ \hline
    \caption{\label{table:matching-debarre-d-2} Additive smooth Fano toric surfaces.}
\end{longtable}

\subsection{Threefolds}\hfill

\begin{proposition}\label{prop:d-3-cohomology}
Let $P \subset N_{\QQ}$ be a smooth Fano polytope of dimension $3$. Then, the non-constant Betti numbers of $X_{P^{\vee}}$ are
\[
h_1 = b_2 = h_2 = b_4 = \rho(X_{P^{\vee}}) = \#\mathcal{V}(P) - 3.
\]
\end{proposition}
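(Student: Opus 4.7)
The plan is to specialise the two preceding propositions to the case $n = 3$. First I would observe that a smooth Fano polytope $P$ is automatically simplicial: by definition the vertices of every facet form a basis of $N$, so in particular each facet has exactly $n = 3$ vertices. Hence the standing hypothesis of Subsection ``Betti numbers'' (that $P$ is simplicial with $0 \in \operatorname{RelInt}(P)$) is satisfied, and both Proposition \ref{prop:poincare-duality} and Proposition \ref{prop:formula-betti} apply to $X_{P^{\vee}}$.

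Next, I would invoke Proposition \ref{prop:poincare-duality}(1) with $n = 3$ to obtain
\[
h_0 = h_3, \qquad h_1 = h_2,
\]
and use Proposition \ref{prop:formula-betti}(1) to recognise $h_0 = h_3 = 1$ as constant. Together with Proposition \ref{prop:poincare-duality}(2), which gives the constant vanishings $b_1 = b_3 = b_5 = 0$, this shows that among the seven Betti numbers $b_0, \ldots, b_6$ only $b_2 = h_1$ and $b_4 = h_2$ can possibly depend on the face numbers of $P$.

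Finally, I would apply Proposition \ref{prop:formula-betti}(2), which yields
\[
h_1 = b_2 = \#\mathcal{V}(P) - n = \#\mathcal{V}(P) - 3.
\]
Smoothness of $X_{P^{\vee}}$ (guaranteed by $P$ being smooth Fano) identifies $b_2$ with $\rho(X_{P^{\vee}})$, and combining with $h_1 = h_2$ from Poincaré duality gives the full chain $h_1 = b_2 = h_2 = b_4 = \rho(X_{P^{\vee}}) = \#\mathcal{V}(P) - 3$.

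There is no genuine obstacle: the argument is a direct specialisation of the two previous propositions to $n = 3$. The only point requiring a brief remark is the verification that $P$ is simplicial, which is immediate from the smooth Fano hypothesis.
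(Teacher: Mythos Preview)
Your proposal is correct and follows the same approach as the paper, which simply states that the result follows from Propositions \ref{prop:poincare-duality} and \ref{prop:formula-betti}. You have merely unpacked those references with the appropriate specialisation to $n=3$, and your added remark that the smooth Fano hypothesis ensures $P$ is simplicial (so that the Betti-number subsection applies) is a welcome clarification that the paper leaves implicit.
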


\begin{proof}
This follows from Propositions \ref{prop:poincare-duality} and \ref{prop:formula-betti}.
\end{proof}

There are 18 isomorphism classes of smooth Fano toric threefolds, of which 4 are not additive, 12 are additive but not uniquely additive, and 2 are uniquely additive.

Tables \ref{table:matching-mori-d-3} and \ref{table:matching-mori-d-3-uniquely-additive} contain all additive smooth Fano toric threefolds. These are matched to the data in the GRDB \cite{gavin2009about} and to the classifications in \cite{mori2003classification} and \cite{HuanMont2020Fano}. Non-additive threefolds are omitted. The column labels indicate, in order:

\begin{enumerate}[1.]
    \item \textbf{Nº}: The nº in \cite{mori2003classification}.
    \item \textbf{ID}: The ID in the GRDB.
    \item \textbf{Degree}: The degree $(-K_X)^3$, obtained from \cite{mori2003classification} and the GRDB.
    \item $\bm{b_2}$: The second Betti number, obtained from \cite{mori2003classification} and calculated using Proposition \ref{prop:d-3-cohomology}.
    \item $\bm{\#\mathcal{V}(P), \#\mathcal{F}(P)}$: The number of vertices and facets of the smooth Fano polytope $P \subset N_{\QQ}$, obtained from the GRDB.
    \item \textbf{Notation}: The notation in \cite{HuanMont2020Fano}.
    \item \textbf{Uniquely additive?}: \textit{Yes} if the variety is uniquely additive, \textit{no} if it is not.
    \item \textbf{Variety} (Table \ref{table:matching-mori-d-3-uniquely-additive}): The corresponding smooth Fano toric threefold $X_{P^{\vee}}$, obtained from \cite{mori2003classification}.  
\end{enumerate}

\begin{longtable}[ht]{|p{1cm}|p{1cm}|p{2cm}|p{1cm}|p{1.5cm}|p{1.5cm}|p{2cm}|p{2cm}|}
    \hline
    \textbf{Nº} & \textbf{ID} & \textbf{Degree} & $\bm{b_2}$ & $\bm{\#\mathcal{V}(P)}$ & $\bm{\#\mathcal{F}(P)}$ & \textbf{Notation} & \textbf{Uniquely additive?} \\ \hline
    - & 23 & 64 & 1 & 4 & 4 & $\mathbb{P}^3$ & No \\ \hline
    33 & 22 & 54 & 2 & 5 & 6 & $II_{33}$ & No \\ \hline
    34 & 19 & 54 & 2 & 5 & 6 & $II_{34}$ & No \\ \hline
    35 & 20 & 56 & 2 & 5 & 6 & $II_{35}$ & No \\ \hline
    36 & 7 & 62 & 2 & 5 & 6 & $II_{36}$ & No \\ \hline
    26 & 16 & 46 & 3 & 6 & 8 & $III_{26}$ & No \\ \hline
    27 & 21 & 48 & 3 & 6 & 8 & $III_{27}$ & Yes \\ \hline
    28 & 17 & 48 & 3 & 6 & 8 & $III_{28}$ & No \\ \hline
    29 & 6 & 50 & 3 & 6 & 8 & $III_{29}$ & No \\ \hline
    30 & 12 & 50 & 3 & 6 & 8 & $III_{30}$ & No \\ \hline
    31 & 11 & 52 & 3 & 6 & 8 & $III_{31}$ & No \\ \hline
    10 & 14 & 42 & 4 & 7 & 10 & $IV_{10}$ & Yes \\ \hline
    11 & 10 & 44 & 4 & 7 & 10 & $IV_{11}$ & No \\ \hline
    12 & 8 & 46 & 4 & 7 & 10 & $IV_{12}$ & No \\ \hline
    \caption{\label{table:matching-mori-d-3} Additive smooth Fano toric threefolds.}
\end{longtable}

\begin{table}[ht]
    \begin{tabular}{|p{1cm}|p{1cm}|p{2cm}|p{3cm}|}
        \hline
        \textbf{Nº} & \textbf{ID} & \textbf{Notation} & \textbf{Variety} \\ \hline
        27 & 21 & $III_{27}$ & $\mathbb{P}^1 \times \mathbb{P}^1 \times \mathbb{P}^1$ \\ \hline
        10 & 14 & $IV_{10}$ & $\mathbb{P}^1 \times \Bl_{p, q} \left ( \PP^2 \right)$ \\ \hline
    \end{tabular}
    \centering

\vspace{2mm}
    
    \caption{\label{table:matching-mori-d-3-uniquely-additive} Uniquely additive smooth Fano toric threefolds.}
\end{table}

\subsection{Fourfolds}\hfill

\begin{proposition}\label{prop:d-4-cohomology}
Let $P \subset N_{\QQ}$ be a smooth Fano polytope of dimension $4$. Then, the non-constant Betti numbers of $X_{P^{\vee}}$ are
\[
h_1 = b_2 = h_3 = b_6 = \rho(X_{P^{\vee}}) = \#\mathcal{V}(P) - 4
\]
and
\[
h_2 = b_4 = -2\#\mathcal{V}(P) + \#\mathcal{F}(P) + 6.
\]
\end{proposition}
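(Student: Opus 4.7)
The plan is to apply Proposition \ref{prop:poincare-duality} and Proposition \ref{prop:formula-betti} with $n = 4$, much as in the proof of Proposition \ref{prop:d-3-cohomology}. The first chain of equalities is nearly automatic: Proposition \ref{prop:formula-betti} part 2 specialized to $n = 4$ gives $h_1 = b_2 = \#\mathcal{V}(P) - 4$ and $h_1 = \rho(X_{P^{\vee}})$ whenever $X_{P^{\vee}}$ is smooth (which holds here, since $P$ smooth Fano implies $P^{\vee}$ smooth), while Poincar\'e duality (Proposition \ref{prop:poincare-duality} part 1) gives $h_1 = h_3$, i.e. $b_2 = b_6$.

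For the middle Betti number $h_2 = b_4$, I would expand Proposition \ref{prop:formula-betti} at $p = 2$, $d = 4$:
\begin{equation*}
h_2 = \binom{2}{2} f_1 - \binom{3}{2} f_0 + \binom{4}{2} f_{-1} = f_1 - 3 f_0 + 6.
\end{equation*}
Matching this to the desired formula $-2 f_0 + f_3 + 6$ reduces the problem to the identity $f_1 = f_0 + f_3$.

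The key observation is that a smooth Fano polytope is simplicial: by definition the vertices of every facet form a basis of $N$, so each facet has exactly $n = 4$ vertices and is a tetrahedron. This yields two identities. First, a double counting of incidences between tetrahedral facets and triangular $2$-faces (each facet contains $4$ triangles and each triangle lies in exactly $2$ facets) gives $4 f_3 = 2 f_2$, i.e.\ $f_2 = 2 f_3$. Second, evaluating Proposition \ref{prop:formula-betti} at $p = 0$ and using $h_0 = 1$ returns the Euler relation $f_0 - f_1 + f_2 - f_3 = 0$. Combining, $f_1 = f_0 + f_2 - f_3 = f_0 + f_3$, which substituted above gives $h_2 = -2\#\mathcal{V}(P) + \#\mathcal{F}(P) + 6$.

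I do not expect any real obstacle. The only subtlety is recognising that the smooth Fano hypothesis forces $P$ to be simplicial, so that $f_2 = 2 f_3$ holds; the rest is elementary arithmetic with the formulas already established in the preliminaries.
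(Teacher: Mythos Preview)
Your argument is correct. The overall structure matches the paper's: expand $h_2 = f_1 - 3f_0 + 6$ from Proposition~\ref{prop:formula-betti} and then establish $f_1 = f_0 + f_3$.

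The only difference is in how that face-number identity is obtained. The paper stays entirely within the $h$-vector formalism: it expands the long formula for $h_1$ in Proposition~\ref{prop:formula-betti} part~2 (namely $h_1 = f_2 - 2f_1 + 3f_0 - 4$) and equates it with the short formula $f_0 - 4$, obtaining $f_2 - 2f_1 + 2f_0 = 0$; adding this to the Euler relation $f_3 - f_2 + f_1 - f_0 = 0$ (from $h_0 = 1$) yields $f_1 = f_0 + f_3$ directly. You instead invoke the geometric fact $f_2 = 2f_3$ via a ridge--facet double count, and combine it with the same Euler relation. Your route is slightly more hands-on and makes the role of simpliciality explicit; the paper's route is more uniform in that it never leaves the identities already recorded in Propositions~\ref{prop:poincare-duality} and~\ref{prop:formula-betti} (where simpliciality is a standing hypothesis). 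Both are equally valid and equally short.
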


\begin{proof}
Proposition \ref{prop:formula-betti} 2. implies that $h_1 = b_2 = f_2 - 2f_1 + 3\#\mathcal{V}(P) - 4 = h_3 = b_6 = \#\mathcal{V}(P) - 4$. Then,
\begin{equation*}
    f_2 - 2f_1 + 2\#\mathcal{V}(P) = 0. \tag{1}\label{eq:prop:d-4-cohomology-1}
\end{equation*}
On the other hand, Proposition \ref{prop:formula-betti} 1. implies that $h_0 = b_0 = \#\mathcal{F}(P) - f_2 + f_1 - \#\mathcal{V}(P) + 1 = h_4 = b_8 = 1$. Then,
\begin{equation*}
    \#\mathcal{F}(P) - f_2 + f_1 - \#\mathcal{V}(P) = 0. \tag{2}\label{eq:prop:d-4-cohomology-2}
\end{equation*}
Adding (\ref{eq:prop:d-4-cohomology-1}) and (\ref{eq:prop:d-4-cohomology-2}) gives $f_1 = \#\mathcal{V}(P) + \#\mathcal{F}(P)$, thus the identity in Proposition \ref{prop:formula-betti} implies that $h_2 = b_4 = f_1 - 3\#\mathcal{V}(P) + 6 = -2\#\mathcal{V}(P) + \#\mathcal{F}(P) + 6$.
\end{proof}

There are 124 isomorphism classes of smooth Fano toric fourfolds, of which 45 are not additive, 75 are additive but not uniquely additive, and 4 are uniquely additive.

Tables \ref{table:matching-batyrev-d-4} and \ref{table:matching-batyrev-d-4-uniquely-additive} contain all additive smooth Fano toric fourfolds. These are matched to the data in the GRDB \cite{gavin2009about} and to the classification in \cite{Baty1999Classification} (see also \cite{Sato2000Toward}). Non-additive fourfolds are omitted. The column labels indicate, in order:

\begin{enumerate}[1.]
    \item \textbf{Nº}: The nº in \cite{Baty1999Classification}.
    \item \textbf{ID}: The ID in the GRDB.
    \item \textbf{Degree}: The degree $(-K_X)^4$, obtained from \cite{Baty1999Classification} and the GRDB.
    \item $\bm{b_2, b_4}$: The second and fourth Betti numbers, obtained from \cite{Baty1999Classification} and calculated using Proposition \ref{prop:d-4-cohomology}.
    \item $\bm{\#\mathcal{F}(P), \#\mathcal{V}(P)}$: The number of facets and vertices of the smooth Fano polytope $P \subset N_{\QQ}$, obtained from the GRDB.
    \item \textbf{Notation}: The notation in \cite{Baty1999Classification}.
    \item \textbf{Uniquely additive?}: \textit{Yes} if the polytope is uniquely additive, \textit{no} if it is not.
    \item \textbf{Variety} (Table \ref{table:matching-batyrev-d-4-uniquely-additive}): The corresponding smooth Fano toric fourfold $X_{P^{\vee}}$, obtained from \cite{Baty1999Classification}.  
\end{enumerate}

\begin{longtable}[ht]{|p{1cm}|p{1cm}|p{2cm}|p{1cm}|p{1cm}|p{1.5cm}|p{1.5cm}|p{2cm}|p{2cm}|}
    \hline
    \textbf{Nº} & \textbf{ID} & \textbf{Degree} & $\bm{b_2}$ & $\bm{b_4}$ & $\bm{\#\mathcal{F}(P)}$ & $\bm{\#\mathcal{V}(P)}$ & \textbf{Notation} & \textbf{Uniquely additive?} \\ \hline
    1 & 147 & 625 & 1 & 1 & 5 & 5 & $\mathbb{P}^4$ & No \\ \hline
    2 & 25 & 800 & 2 & 2 & 8 & 6 & $B_1$ & No \\ \hline
    3 & 139 & 640 & 2 & 2 & 8 & 6 & $B_2$ & No \\ \hline
    4 & 144 & 544 & 2 & 2 & 8 & 6 & $B_3$ & No \\ \hline
    5 & 145 & 512 & 2 & 2 & 8 & 6 & $B_4$ & No \\ \hline
    6 & 138 & 512 & 2 & 2 & 8 & 6 & $B_5$ & No \\ \hline
    7 & 44 & 594 & 2 & 3 & 9 & 6 & $C_1$ & No \\ \hline
    8 & 141 & 513 & 2 & 3 & 9 & 6 & $C_2$ & No \\ \hline
    9 & 70 & 513 & 2 & 3 & 9 & 6 & $C_3$ & No \\ \hline
    10 & 146 & 486 & 2 & 3 & 9 & 6 & $C_4$ & No \\ \hline
    11 & 24 & 605 & 3 & 3 & 11 & 7 & $E_1$ & No \\ \hline
    12 & 128 & 489 & 3 & 3 & 11 & 7 & $E_2$ & No \\ \hline
    13 & 127 & 431 & 3 & 3 & 11 & 7 & $E_3$ & No \\ \hline
    14 & 30 & 592 & 3 & 4 & 12 & 7 & $D_1$ & No \\ \hline
    15 & 31 & 576 & 3 & 4 & 12 & 7 & $D_2$ & No \\ \hline
    16 & 49 & 560 & 3 & 4 & 12 & 7 & $D_3$ & No \\ \hline
    17 & 35 & 560 & 3 & 4 & 12 & 7 & $D_4$ & No \\ \hline
    18 & 42 & 496 & 3 & 4 & 12 & 7 & $D_5$ & No \\ \hline
    19 & 129 & 496 & 3 & 4 & 12 & 7 & $D_6$ & No \\ \hline
    20 & 97 & 486 & 3 & 4 & 12 & 7 & $D_7$ & No \\ \hline
    21 & 134 & 480 & 3 & 4 & 12 & 7 & $D_8$ & No \\ \hline
    22 & 66 & 464 & 3 & 4 & 12 & 7 & $D_9$ & No \\ \hline
    23 & 132 & 464 & 3 & 4 & 12 & 7 & $D_{10}$ & No \\ \hline
    24 & 117 & 459 & 3 & 4 & 12 & 7 & $D_{11}$ & No \\ \hline
    25 & 140 & 448 & 3 & 4 & 12 & 7 & $D_{12}$ & No \\ \hline
    26 & 143 & 432 & 3 & 4 & 12 & 7 & $D_{13}$ & No \\ \hline
    27 & 133 & 432 & 3 & 4 & 12 & 7 & $D_{14}$ & No \\ \hline
    28 & 135 & 432 & 3 & 4 & 12 & 7 & $D_{15}$ & No \\ \hline
    29 & 68 & 432 & 3 & 4 & 12 & 7 & $D_{16}$ & No \\ \hline
    33 & 41 & 529 & 3 & 5 & 13 & 7 & $G_1$ & No \\ \hline
    34 & 40 & 450 & 3 & 5 & 13 & 7 & $G_2$ & No \\ \hline
    35 & 64 & 433 & 3 & 5 & 13 & 7 & $G_3$ & No \\ \hline
    36 & 60 & 417 & 3 & 5 & 13 & 7 & $G_4$ & No \\ \hline
    37 & 69 & 406 & 3 & 5 & 13 & 7 & $G_5$ & No \\ \hline
    38 & 137 & 401 & 3 & 5 & 13 & 7 & $G_6$ & No \\ \hline
    39 & 26 & 558 & 4 & 5 & 15 & 8 & $H_1$ & No \\ \hline
    40 & 45 & 505 & 4 & 5 & 15 & 8 & $H_2$ & No \\ \hline
    41 & 28 & 478 & 4 & 5 & 15 & 8 & $H_3$ & No \\ \hline
    42 & 118 & 447 & 4 & 5 & 15 & 8 & $H_4$ & No \\ \hline
    43 & 123 & 415 & 4 & 5 & 15 & 8 & $H_5$ & No \\ \hline
    46 & 124 & 378 & 4 & 5 & 15 & 8 & $H_8$ & No \\ \hline
    49 & 74 & 480 & 4 & 6 & 16 & 8 & $L_1$ & No \\ \hline
    50 & 75 & 464 & 4 & 6 & 16 & 8 & $L_2$ & No \\ \hline
    51 & 83 & 448 & 4 & 6 & 16 & 8 & $L_3$ & No \\ \hline
    52 & 105 & 432 & 4 & 6 & 16 & 8 & $L_4$ & No \\ \hline
    53 & 95 & 416 & 4 & 6 & 16 & 8 & $L_5$ & No \\ \hline
    54 & 112 & 400 & 4 & 6 & 16 & 8 & $L_6$ & No \\ \hline
    55 & 106 & 384 & 4 & 6 & 16 & 8 & $L_7$ & No \\ \hline
    56 & 142 & 384 & 4 & 6 & 16 & 8 & $L_8$ & Yes \\ \hline
    57 & 130 & 384 & 4 & 6 & 16 & 8 & $L_9$ & No \\ \hline
    62 & 33 & 496 & 4 & 6 & 16 & 8 & $I_1$ & No \\ \hline
    63 & 29 & 463 & 4 & 6 & 16 & 8 & $I_2$ & No \\ \hline
    64 & 47 & 442 & 4 & 6 & 16 & 8 & $I_3$ & No \\ \hline
    65 & 38 & 433 & 4 & 6 & 16 & 8 & $I_4$ & No \\ \hline
    67 & 93 & 411 & 4 & 6 & 16 & 8 & $I_6$ & No \\ \hline
    68 & 37 & 400 & 4 & 6 & 16 & 8 & $I_7$ & No \\ \hline
    69 & 115 & 384 & 4 & 6 & 16 & 8 & $I_8$ & No \\ \hline
    70 & 94 & 390 & 4 & 6 & 16 & 8 & $I_9$ & No \\ \hline
    71 & 111 & 389 & 4 & 6 & 16 & 8 & $I_{10}$ & No \\ \hline
    72 & 59 & 384 & 4 & 6 & 16 & 8 & $I_{11}$ & No \\ \hline
    74 & 126 & 368 & 4 & 6 & 16 & 8 & $I_{13}$ & No \\ \hline
    77 & 61 & 385 & 4 & 7 & 17 & 8 & $M_1$ & No \\ \hline
    78 & 50 & 417 & 4 & 7 & 17 & 8 & $M_2$ & No \\ \hline
    79 & 58 & 369 & 4 & 7 & 17 & 8 & $M_3$ & No \\ \hline
    80 & 57 & 369 & 4 & 7 & 17 & 8 & $M_4$ & No \\ \hline
    81 & 110 & 364 & 4 & 7 & 17 & 8 & $M_5$ & No \\ \hline
    84 & 71 & 442 & 5 & 8 & 20 & 9 & $Q_1$ & No \\ \hline
    85 & 79 & 405 & 5 & 8 & 20 & 9 & $Q_2$ & No \\ \hline
    86 & 73 & 394 & 5 & 8 & 20 & 9 & $Q_3$ & No \\ \hline
    87 & 77 & 405 & 5 & 8 & 20 & 9 & $Q_4$ & No \\ \hline
    88 & 81 & 373 & 5 & 8 & 20 & 9 & $Q_5$ & No \\ \hline
    89 & 84 & 368 & 5 & 8 & 20 & 9 & $Q_6$ & No \\ \hline
    90 & 91 & 363 & 5 & 8 & 20 & 9 & $Q_7$ & No \\ \hline
    91 & 90 & 352 & 5 & 8 & 20 & 9 & $Q_8$ & No \\ \hline
    93 & 102 & 336 & 5 & 8 & 20 & 9 & $Q_{10}$ & No \\ \hline
    94 & 120 & 336 & 5 & 8 & 20 & 9 & $Q_{11}$ & Yes \\ \hline
    105 & 89 & 332 & 5 & 9 & 21 & 9 & $R_1$ & No \\ \hline
    117 & 62 & 307 & 5 & 11 & 23 & 9 & See Remark \ref{remark:special-polytope} & Yes \\ \hline
    119 & 98 & 294 & 6 & 11 & 25 & 10 & $\Bl_{p, q} \left ( \PP^2 \right) \times \Bl_{p, q} \left ( \PP^2 \right)$ & Yes \\ \hline
    \caption{\label{table:matching-batyrev-d-4} Additive smooth Fano toric fourfolds.}
\end{longtable}

\begin{table}[ht]
    \begin{tabular}{|p{1cm}|p{1cm}|p{4cm}|p{4cm}|}
        \hline
        \textbf{Nº} & \textbf{ID} & \textbf{Notation} & \textbf{Variety} \\ \hline
        56 & 142 & $L_8$ & $\mathbb{P}^1 \times \mathbb{P}^1 \times \mathbb{P}^1 \times \mathbb{P}^1$ \\
        \hline
        94 & 120 & $Q_{11}$ & $\mathbb{P}^1 \times \mathbb{P}^1 \times \Bl_{p, q} \left ( \PP^2 \right)$ \\
        \hline
        117 & 62 & See Remark \ref{remark:special-polytope} & See Remark \ref{remark:special-polytope} \\
        \hline
        119 & 98 & $\Bl_{p, q} \left ( \PP^2 \right) \times \Bl_{p, q} \left ( \PP^2 \right)$ & $\Bl_{p, q} \left ( \PP^2 \right) \times \Bl_{p, q} \left ( \PP^2 \right)$ \\
        \hline
    \end{tabular}
    \centering

\vspace{2mm}
    
    \caption{\label{table:matching-batyrev-d-4-uniquely-additive} Uniquely additive smooth Fano toric fourfolds.}
\end{table}

\begin{remark}\label{remark:special-polytope}
    There is a small misprint in Batyrev's classification. Namely, the types of polytopes Nº 117 and 118 are exchanged. The correct statement is that polytope Nº 117 is isomorphic to the convex hull of the subset
    \[
    \left \{ \pm e_1, \ldots, \pm e_n, -\sum_{i = 1}^n e_i \right \} \subset N_{\QQ}.
    \]
    This can be seen, for example, by using Proposition \ref{prop:d-4-cohomology} to calculate the corresponding invariants. The corresponding fourfold is isomorphic to $\Bl_{p} \left (\left ( \PP^1 \right )^4 \right)$, the blow-up of $\left ( \PP^1 \right )^4$ at one torus-invariant point.
\end{remark}

\subsection{Fivefolds and sixfolds}\hfill

Data for fivefolds and sixfolds can be accessed through the \texttt{listAdditiveSmoothFanoToricVarieties} and \texttt{listUniquelyAdditiveSmoothFanoToricVarieties} methods of the \emph{Macaulay2} package, see Subsections \ref{subsection:list-additive} and \ref{subsection:list-uniquely-additive}.

There are $866$ isomorphism classes of smooth Fano toric fivefolds, of which $396$ are not additive, $466$ are additive but not uniquely additive, and $4$ are uniquely additive.

There are $7622$ isomorphism classes of smooth Fano toric sixfolds, of which $4194$ are not additive, $3420$ are additive but not uniquely additive, and $8$ are uniquely additive.

\subsection{Uniquely additive varieties}\hfill

\begin{definition}
    Let $P, Q \subset N_{\QQ}$ be two polytopes. The \textbf{free sum} $P \oplus Q \subset N_{\QQ} \times N_{\QQ}$ is the polytope defined as the convex hull of the subset
    \[
    (P \times \{0_{N_{\QQ}}\}) \cup (\{0_{N_{\QQ}}\} \times Q) \subset N_{\QQ} \times N_{\QQ}.
    \]
\end{definition}

\begin{proposition}\cite{CoxLittSche2011Toric}
    Let $P, Q \subset N_{\QQ}$ be two polytopes such that $0_{N_{\QQ}} \in \RelInt(P), \RelInt(Q)$. Then:
    \begin{enumerate}[1.]
        \item $(P \oplus Q)^{\vee} = P^{\vee} \times Q^{\vee}$.
        \item The product $P^{\vee} \times Q^{\vee}$ corresponds to the toric variety $X_{P^{\vee}} \times X_{Q^{\vee}}$ with the Segre embedding.
    \end{enumerate}
\end{proposition}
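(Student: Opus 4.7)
The plan is to handle the two claims independently: part 1 is a direct calculation from the definitions of free sum and dual polytope, while part 2 reduces to a standard fact about product polytopes and their associated projective toric varieties.

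For part 1, I would begin by unwinding the definition of the dual polytope in $M_{\QQ} \times M_{\QQ}$:
\[
(P \oplus Q)^{\vee} = \{(m_1, m_2) \in M_{\QQ} \times M_{\QQ} : \langle u, (m_1, m_2) \rangle \geq -1 \text{ for all } u \in P \oplus Q\}.
\]
Since $P \oplus Q$ is by definition the convex hull of $(P \times \{0_{N_{\QQ}}\}) \cup (\{0_{N_{\QQ}}\} \times Q)$, and each defining inequality is linear, the inequality above holds on $P \oplus Q$ if and only if it holds on its generating set. Evaluating at generators $(u_1, 0)$ with $u_1 \in P$ gives $\langle u_1, m_1 \rangle \geq -1$, and at $(0, u_2)$ with $u_2 \in Q$ gives $\langle u_2, m_2 \rangle \geq -1$. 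These decouple cleanly into the conditions $m_1 \in P^{\vee}$ and $m_2 \in Q^{\vee}$, yielding $(P \oplus Q)^{\vee} = P^{\vee} \times Q^{\vee}$. The hypothesis $0 \in \RelInt(P), \RelInt(Q)$ is used implicitly to ensure both duals are themselves bounded polytopes so the equality is between genuine polytopes rather than unbounded polyhedra.

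For part 2, I would invoke the standard correspondence between product polytopes and products of projective toric varieties: the normal fan of $P^{\vee} \times Q^{\vee}$ in $N_{\QQ} \times N_{\QQ}$ is the product of the normal fans $\Sigma_{P^{\vee}} \times \Sigma_{Q^{\vee}}$, and the toric variety of a product fan is $X_{P^{\vee}} \times X_{Q^{\vee}}$ (see \cite{CoxLittSche2011Toric}). To identify the projective embedding determined by the lattice polytope $P^{\vee} \times Q^{\vee}$, I would note that its lattice points are exactly pairs of lattice points of $P^{\vee}$ and $Q^{\vee}$, so the associated linear series is the tensor product of those defining $X_{P^{\vee}} \hookrightarrow \PP^{N_1}$ and $X_{Q^{\vee}} \hookrightarrow \PP^{N_2}$; this is precisely the Segre embedding $\PP^{N_1} \times \PP^{N_2} \hookrightarrow \PP^{(N_1+1)(N_2+1) - 1}$ restricted to the factors.

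The main obstacle, such as it is, is purely bookkeeping: making sure the dualities between $N_{\QQ}$ and $M_{\QQ}$ are identified correctly on the product lattices, and that the hypothesis on $0 \in \RelInt$ is what guarantees $P^{\vee}, Q^{\vee}$ are bounded polytopes so that both sides of the equality in part 1 are polytopes of the expected dimension. Once these identifications are in place, both parts reduce to standard results that can be cited from \cite{CoxLittSche2011Toric}.
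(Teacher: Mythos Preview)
Your argument is correct. Note, however, that the paper does not actually prove this proposition: it is stated with a citation to \cite{CoxLittSche2011Toric} and no proof environment follows. So there is no ``paper's own proof'' to compare against beyond the bare reference. What you have written is essentially the standard verification one would carry out upon unpacking that citation (the dual computation for part~1 is elementary convex geometry, and part~2 is the content of, e.g., \cite[Proposition~3.1.14 and \S2.4]{CoxLittSche2011Toric}), so your proposal is consistent with the paper's approach of deferring to that source.
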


\begin{definition}\label{defi:dP-polytope}
    Let $n \in \ZZ^+$ be even.
    \begin{enumerate}[1.]
        \item The \textbf{del Pezzo polytope} is the polytope defined as the convex hull of the subset
        \[
        \left \{ \pm e_1, \ldots, \pm e_n, \pm \sum_{i = 1}^n e_i \right \} \subset N_{\QQ}.
        \]
        \item The \textbf{pseudo-del Pezzo polytope} is the polytope defined as the convex hull of the subset
        \[
        \left \{ \pm e_1, \ldots, \pm e_n, -\sum_{i = 1}^n e_i \right \} \subset N_{\QQ}.
        \]
    \end{enumerate}
\end{definition}

\begin{remark}
    The toric variety corresponding to the del Pezzo polygon (see Definition~\ref{defi:dP-polytope}) is isomorphic to both $\Bl_{p} \left ( \left ( \PP^1 \right )^2 \right)$ and $\Bl_{p, q} \left ( \PP^2 \right )$.
\end{remark}

\begin{proposition}\cite{casagrande2003centrally}
    Let $P \subset N_{\QQ}$ be a smooth Fano polytope with $n$ distinct unordered pairs $(v_1, v_2)$ of vertices such that $v_1 = -v_2$. Then, $P$ is isomorphic to a free sum of intervals $[-1, 1]$, del Pezzo polytopes and pseudo-del Pezzo polytopes.
\end{proposition}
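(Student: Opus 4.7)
My plan is to exploit the $n$ antipodal pairs to set up coordinates on $N$ adapted to the central symmetry, then analyze the remaining vertices and show they couple coordinates only in block-diagonal fashion, so that $P$ splits as a free sum of blocks that can be classified individually.

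First, denote the pairs $\{\pm v_1\}, \ldots, \{\pm v_n\}$. Since $P$ is $n$-dimensional, the $v_i$ span $N_\QQ$. Using the smooth Fano condition---each facet's vertices form a $\ZZ$-basis of $N$---together with the antipodal structure, I would argue that, after reordering and sign changes, $v_1, \ldots, v_n$ itself is a $\ZZ$-basis: start from a facet containing $v_1$, whose vertices automatically form a basis, and inductively swap its vertices for antipodal partners drawn from the list $\pm v_i$. Identifying $v_i$ with $e_i$, write $\mathcal{V}(P) = \{\pm e_1, \ldots, \pm e_n\} \sqcup W$ for some finite set $W$ of \emph{extra} vertices. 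Considering, for each $w \in W$, the facets through $w$ and using smoothness, one shows that every coordinate of $w$ must lie in $\{0, \pm 1\}$.

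Next, define an equivalence relation on $\{1, \ldots, n\}$ by declaring $i \sim j$ whenever some $w \in W$ satisfies $w_i \neq 0 \neq w_j$, with resulting equivalence classes $I_1, \ldots, I_k$. Setting
\[
P_\ell \coloneqq \operatorname{conv}\bigl(\{\pm e_i : i \in I_\ell\} \cup \{w \in W : \operatorname{supp}(w) \subseteq I_\ell\}\bigr),
\]
every extra vertex is supported in exactly one class by construction, so $P = P_1 \oplus \cdots \oplus P_k$ as a free sum, and each $P_\ell$ is itself a smooth Fano polytope in the corresponding coordinate subspace.

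The main obstacle is classifying an indecomposable block $P_\ell$. If $|I_\ell| = 1$, then no extra vertex can be supported on $I_\ell$ and $P_\ell = [-1, 1]$. If $|I_\ell| \geq 2$, indecomposability forces every extra vertex $w$ of $P_\ell$ to have full support on $I_\ell$, since otherwise $I_\ell$ would split further. A careful application of the smooth Fano condition to the facets through $w$ then shows that, after a diagonal sign change absorbed into the basis, $w = \pm \sum_{i \in I_\ell} e_i$, and a parity argument on these facets forces $|I_\ell|$ to be even. The block is therefore a pseudo-del Pezzo polytope if only one of $\pm \sum_{i \in I_\ell} e_i$ occurs in $W$ and a del Pezzo polytope if both do. The delicate points---ruling out partial-support extra vertices in an indecomposable block and establishing the parity constraint---are where the smooth Fano condition is pushed hardest and form the technical core of the argument.
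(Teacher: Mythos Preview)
The paper does not supply its own proof of this proposition: it is quoted verbatim from \cite{casagrande2003centrally} and used as a black box, so there is no argument in the paper to compare your proposal against.

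For what it is worth, your sketch follows the standard route for this result (going back to Ewald in the centrally symmetric case and refined by Casagrande): choose a lattice basis among the antipodal pairs, show the remaining vertices are $\{0,\pm 1\}$-vectors, split along supports, and classify the indecomposable pieces. The places you flag as delicate---that the $\pm v_i$ can be taken to be a $\ZZ$-basis, that extra vertices have full support on their block and are (up to sign) the all-ones vector, and the even-dimension constraint on the (pseudo-)del Pezzo blocks---are indeed the technical heart of Casagrande's argument, and your outline acknowledges but does not carry them out. If you want to turn this into a self-contained proof you would need to fill those in; otherwise the paper's choice to simply cite the result is the honest option.
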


\begin{proposition}
    Let $P \subset N_{\QQ}$ be a polytope.
    \begin{enumerate}[1.]
        \item If $P$ is isomorphic to a del Pezzo polytope, then the corresponding toric variety $X_{P^{\vee}}$ is not additive.
        \item If $P$ is isomorphic to a pseudo-del Pezzo polytope, then the corresponding toric variety $X_{P^{\vee}}$ is uniquely additive.
    \end{enumerate}
\end{proposition}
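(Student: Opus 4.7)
The plan is to apply the characterizations of additivity and unique additivity from Corollary \ref{corollary:complete-additive} and Corollary \ref{corollary:complete-uniquely-additive}, using the fact that the vertices of a smooth Fano polytope $P \subset N_{\QQ}$ are precisely the primitive generators of the rays of the fan of $X_{P^{\vee}}$.

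For part 1, I would argue by a counting/symmetry obstruction. The del Pezzo polytope has $2n+2$ vertices partitioned into $n+1$ opposite pairs $\{v, -v\}$. Any basis $B \subset N$ extracted from these vertices has $n$ elements and, by linear independence, contains at most one element from each pair, so some pair $\{v_0, -v_0\}$ is disjoint from $B$. For $X_{P^{\vee}}$ to be additive, Corollary \ref{corollary:complete-additive} would require both $v_0$ and $-v_0$ to lie in the negative octant of $B$; but if $v_0 = \sum_b \lambda_b b$ with all $\lambda_b \leq 0$, then $-v_0$ has non-negative coefficients in $B$, so the only way both can lie in the negative octant is $v_0 = 0$, contradicting that $v_0$ is a vertex. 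This shows non-additivity directly, bypassing any case analysis over maximal cones.

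For part 2, I would exhibit the standard basis explicitly. Take $B = \{e_1, \ldots, e_n\}$ and check that $\mathrm{conv}(e_1, \ldots, e_n)$ is a facet of the pseudo-del Pezzo polytope: the affine hyperplane $\{x : \sum x_i = 1\}$ supports $P$ because the remaining vertices $-e_j$ and $-\sum e_i$ satisfy $\sum x_i < 1$. Hence $B$ spans a maximal cone. The remaining ray generators $-e_1, \ldots, -e_n$ and $-\sum e_i$ all have non-positive coefficients in $B$, so Corollary \ref{corollary:complete-additive} gives additivity. For uniqueness, I would compute $\mathfrak{R}_{e_i}$ directly: writing $m = \sum_j m_j e_j^{\vee}$, the conditions $\langle e_j, m\rangle \geq 0$ and $\langle -e_j, m\rangle \geq 0$ for $j \neq i$ force $m_j = 0$, while $\langle e_i, m\rangle = -1$ forces $m_i = -1$; the conditions on $-\sum e_i$ and $-e_i$ are then automatic. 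Thus $\mathfrak{R}_{e_i} = \{-e_i^{\vee}\}$, and Corollary \ref{corollary:complete-uniquely-additive} yields unique additivity.

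I do not anticipate a hard step: the obstruction in part 1 is a one-line symmetry argument built on the pairing of opposite vertices, and part 2 reduces to an explicit verification on the standard basis. The only subtlety worth stating carefully is that in part 1 we are ruling out all possible bases at once (a strictly stronger statement than what Corollary \ref{corollary:complete-additive} requires), so no case analysis over facets of $P$ is needed; in part 2, the smoothness of $P$ and the fact that $\sum x_i = 1$ is a supporting hyperplane are precisely what make the calculation clean.
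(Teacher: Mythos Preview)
Your proposal is correct. Part 2 is essentially identical to the paper's argument: both exhibit the standard basis $\{e_1,\ldots,e_n\}$, check that the remaining ray generators lie in the negative octant, and verify $\mathfrak{R}_{e_i}=\{-e_i^{\vee}\}$ by exploiting the presence of both $e_j$ and $-e_j$ among the rays for $j\neq i$.

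Part 1, however, takes a genuinely different route. The paper computes the dual polytope $P^{\vee}=\{x:|x_i|\leq 1,\ |x_1+\cdots+x_n|\leq 1\}$ and invokes Remark~\ref{remark:smooth-fano-toric-varieties} (Demazure roots of a reflexive polytope correspond to lattice points in the relative interiors of its facets) to conclude that $\mathfrak{R}=\varnothing$, since every facet of $P^{\vee}$ has empty relative interior. Your argument instead works directly on the $N$-side via Corollary~\ref{corollary:complete-additive}: the $2n+2$ vertices of the del Pezzo polytope come in $n+1$ antipodal pairs, so any candidate basis $B$ of size $n$ must miss an entire pair $\{v_0,-v_0\}$, and then $v_0$ and $-v_0$ cannot both lie in the negative octant of $B$. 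This is more elementary in that it avoids computing $P^{\vee}$ and does not rely on the facet-interior description of Demazure roots; it also yields the slightly stronger conclusion that \emph{no} subset of the vertices forming a lattice basis satisfies the negative-octant condition, not merely those arising from maximal cones. The paper's approach, by contrast, gives the stronger intermediate fact that $\mathfrak{R}$ is actually empty, which is not visible from your argument.
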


\begin{proof}
    \begin{enumerate}[1.]
        \item The dual polytope is
        \[
        P^{\vee} = \{(x_1, \ldots, x_n) \in M_{\QQ}: \quad |x_i| \leq 1 \text{ for all } i = 1, \ldots, n \text{ and } |x_1 + \cdots + x_n| \leq 1\}.
        \]
        It is easy to see that all facets of $P^{\vee}$ have empty relative interior, so the set of Demazure roots of $P^{\vee}$ is empty (see Remark \ref{remark:smooth-fano-toric-varieties}).
        \item The dual polytope is
        \[
        P^{\vee} = \{(x_1, \ldots, x_n) \in M_{\QQ}: \quad |x_i| \leq 1 \text{ for all } i = 1, \ldots, n \text{ and } -x_1 - \cdots - x_n \leq 1\}.
        \]
        There exist rays $\rho_1, \ldots, \rho_n \in \Sigma_{P^{\vee}}(1)$ such that $B = \{p_1, \ldots, p_n\} = \{e_1, \ldots, e_n\}$ is a basis of the lattice $N$ (here, $\{e_1, \ldots, e_n\}$ is the canonical basis of $N$), the remaining rays in $\Sigma(1)$ are contained in the negative octant of $B$, and $\mathfrak{R}_i = \{-p_i^{\vee}\} = \{-e_i^{\vee}\}$ for all $i = 1, \ldots, n$. The result then follows by Theorem \ref{theorem:uniqueness}.
    \end{enumerate}
\end{proof}

The following theorem is a consequence of applying our methods to the data in the GRDB \cite{gavin2009about}.

\begin{theorem}\label{theorem:uniquely-additive-smooth-fano}
    Let $X = X_{P^{\vee}}$ be a smooth Fano toric variety such that $\dim(X) \leq 6$. The following are equivalent:
    \begin{enumerate}[1.]
        \item $X$ is uniquely additive.
        \item $X$ is isomorphic to the product of projective lines $\PP^1$, blow-ups of $\left ( \PP^1 \right )^n$ with $n$ even at one torus-invariant point $\Bl_p \left ( \left ( \PP^1 \right )^n \right)$ and, if $\dim(X_P) = 6$, $X_{{\mathfrak{P}}^{\vee}}$, where $\mathfrak{P} \subset N_{\QQ}$ is the polytope with \emph{ID} $5816$ in the \emph{GRDB}.
        \item $P \subset N_{\QQ}$ is isomorphic to a free sum of intervals $[-1, 1]$, pseudo-del Pezzo polytopes and, if $\dim(P) = 6$, the polytope $\mathfrak{P}$ with \emph{ID} $5816$ on the \emph{GRDB}.
    \end{enumerate}
    In particular, the sequence $(c_n)_{n \geq 1}$, where $c_n$ is the number of isomorphism classes of uniquely additive smooth Fano toric varieties of dimension $n$, starts with $1, 2, 2, 4, 4, 8, \ldots$
\end{theorem}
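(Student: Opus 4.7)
The proof I would give decomposes into three parts: a clean equivalence between (2) and (3), a forward implication (3) $\Rightarrow$ (1), and a backward implication (1) $\Rightarrow$ (3) which is where the finite-range hypothesis $\dim(X) \leq 6$ enters through computational verification on the GRDB.

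First I would dispose of (2) $\Leftrightarrow$ (3) via the free sum duality proposition stated just above: since $(P \oplus Q)^\vee = P^\vee \times Q^\vee$ corresponds to the product $X_{P^\vee} \times X_{Q^\vee}$, decomposing $P$ as a free sum of intervals $[-1,1]$, pseudo-del Pezzo polytopes, and (if $\dim(P) = 6$) the exceptional $\mathfrak{P}$ translates directly into writing $X_{P^\vee}$ as a product of factors $\PP^1$, $\Bl_p((\PP^1)^{2k})$, and $X_{\mathfrak{P}^\vee}$. Note that the pseudo-del Pezzo polytope of dimension $2k$ corresponds to $\Bl_p((\PP^1)^{2k})$, which specializes to $\Bl_{p,q}(\PP^2)$ when $k = 1$; this is the content of the preceding remark about the del Pezzo polygon.

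Next I would show (3) $\Rightarrow$ (1). The key observation is that unique additivity is preserved under products: if $X_{\Sigma_1}$ and $X_{\Sigma_2}$ are uniquely additive, then the complete collection of Demazure roots of $\Sigma_1 \times \Sigma_2$ is the concatenation of the complete collections of each factor (because $\mathfrak{R}_\rho$ for a ray of a product fan only sees the corresponding factor). Combined with Theorem \ref{theorem:uniqueness}, this reduces (3) $\Rightarrow$ (1) to checking the three atomic cases: $\PP^1$ is trivially uniquely additive, pseudo-del Pezzo polytopes yield uniquely additive varieties by the preceding proposition, and the exceptional sixfold $X_{\mathfrak{P}^\vee}$ (ID $5816$) is verified uniquely additive by running \texttt{additiveActions} on the GRDB entry.

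The implication (1) $\Rightarrow$ (3) is the main obstacle. Conceptually, the constraint $\mathfrak{R}_i = \{-p_i^\vee\}$ forces each dual facet to contain exactly one specific relative interior lattice point, which combined with smoothness strongly restricts the combinatorics of the vertex star; heuristically this pushes the polytope toward having many pairs of antipodal vertices, at which point Casagrande's structural theorem (\cite{casagrande2003centrally}, stated earlier) would reduce to a free sum of intervals, del Pezzo, and pseudo-del Pezzo summands, and the del Pezzo summands would be eliminated because they are not additive (hence cannot occur in a uniquely additive product). However, a fully conceptual, dimension-independent proof of this implication would also need to anticipate and identify exceptional uniquely additive polytopes such as $\mathfrak{P}$, and I do not see how to do this abstractly. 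The route I would therefore take for the theorem as stated is computational: apply the \texttt{additiveActions} method to each of the $1 + 5 + 18 + 124 + 866 + 7622 = 8636$ smooth Fano polytopes of dimension at most $6$ in the GRDB, read off those for which \texttt{isUniquelyAdditive} returns \emph{true}, match them against the free sum decompositions in (3), and isolate $\mathfrak{P}$ as the unique exceptional example. This simultaneously yields the enumeration $(c_n)_{n \geq 1} = 1, 2, 2, 4, 4, 8, \ldots$
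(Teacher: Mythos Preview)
Your proof is correct and more structured than the paper's own treatment, which states only that the theorem ``is a consequence of applying our methods to the data in the GRDB'' --- all three equivalences and the enumeration are read off directly from the computer search, with no separate argument given for (2)$\Leftrightarrow$(3) or (3)$\Rightarrow$(1). You instead isolate the computational content to the single implication (1)$\Rightarrow$(3): the equivalence (2)$\Leftrightarrow$(3) you obtain from free-sum duality, and (3)$\Rightarrow$(1) from the lemma that unique additivity is stable under products, which you correctly justify by observing that Demazure roots of a product fan split (completeness of each factor forces the cross-component of any root to vanish, so $\mathfrak{R}_{(p,0)}=\mathfrak{R}_p^{\Sigma_1}\times\{0\}$). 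This makes transparent exactly where the hypothesis $\dim(X)\le 6$ is actually used, and shows that the atomic building blocks in (3) are uniquely additive for conceptual reasons (trivial for $\PP^1$, the preceding proposition for pseudo-del Pezzo polytopes), with only the exceptional $\mathfrak{P}$ and the exhaustiveness of the list requiring the database run.

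One small citation wrinkle: the identification of the $2k$-dimensional pseudo-del Pezzo polytope with $\Bl_p\bigl((\PP^1)^{2k}\bigr)$ is not quite the content of the remark you cite --- that remark, as written, is about the del Pezzo polygon. The relevant internal reference is Remark~\ref{remark:special-polytope} (the $n=4$ case); the general case follows from the standard description of the fan of a toric blow-up at a torus-fixed point, which adds the ray $-\sum e_i$ to the rays $\pm e_i$ of $(\PP^1)^n$.
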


\section{Further directions}\label{section:further-directions}

We welcome any contributions and feature requests for the \emph{Macaulay2} package.

Since the enumeration of classes uniquely additive smooth Fano toric varieties is very simple up to $n = 6$ (see Theorem \ref{theorem:uniquely-additive-smooth-fano}), it would be interesting to characterise and enumerate these classes for higher or arbitrary values of $n$.

More generally, it would also be interesting to enumerate classes of additive complete toric varieties. In order to get finite values, one may consider fans whose primitive ray generators have bounded coordinates. To this effect, Proposition \ref{proposition:negative-octant-criterion} may be a good starting point. For example:

\begin{proposition}\label{proposition:oeis}
    Let $(d_n)_{n \geq 1}$ be the sequence counting, up to the action of $\GL(N)$, the number of 1-skeletons $\Sigma(1)$ of complete toric varieties $X_{\Sigma}$ of dimension $n$ such that:
    \begin{enumerate}[1.]
        \item All primitive lattice vectors $p_1, \ldots, p_{\# \Sigma(1)}$ on the rays $\rho_1, \ldots, \rho_{\# \Sigma(1)} \in \Sigma(1)$ are contained in $[-1, 1]^n$.
        \item For all $i = 1, \ldots, n$, $p_i = e_i$, the $i$-th standard basis vector of $N_{\mathbb{Q}}$.
        \item For all $i = n + 1, \ldots, \# \Sigma(1)$, $p_i$ is in the negative octant of the standard basis of $N_{\QQ}$. 
    \end{enumerate}
    Then, all such varieties $X_{\Sigma}$ are additive, and, for all $n \in \ZZ_+$,
    \[
    1 = d_1 < \cdots < d_n \leq \OEIShypergraphs(n),
    \]
    where $\OEIShypergraphs$ is the \emph{OEIS} \cite[A055621]{oeis} sequence counting the number of hypergraphs on $n$ unlabelled vertices with no isolated vertices. 
\end{proposition}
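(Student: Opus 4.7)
The plan is to verify the four assertions of the proposition in turn: additivity, the base case $d_1 = 1$, strict monotonicity, and the upper bound.

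Additivity is immediate from Proposition \ref{proposition:negative-octant-criterion}, since conditions 3 and 4 say exactly that $B = \{e_1, \ldots, e_n\}$ is a basis of $N$ and that the remaining rays lie in the negative octant of $B$. For the base case, a complete fan in $\RR$ with $e_1 = 1$ as a ray and all rays in $[-1, 1]$ must contain $-e_1 = -1$, yielding $X \cong \PP^1$; hence $d_1 = 1$. For the strict inequality $d_n < d_{n+1}$, I would construct an injection $[X] \mapsto [X \times \PP^1]$: the product fan $\Sigma \times \Sigma_{\PP^1}$ in $\RR^{n+1}$ manifestly satisfies conditions 1--4 after adjoining $e_{n+1}$ as the new basis ray and $-e_{n+1}$ as the only new ray in the negative octant; injectivity is seen by recovering $\Sigma$ from $\Sigma \times \Sigma_{\PP^1}$ as the intersection with the hyperplane $\{x_{n+1} = 0\}$. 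On the other hand, $\PP^{n+1}$ lies in the $(n+1)$-dimensional family (its extra ray is $-e_1 - \cdots - e_{n+1}$) but not in the image, since $\rho(\PP^{n+1}) = 1$ while any product $Y \times \PP^1$ has Picard rank at least two.

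The heart of the proof is the upper bound. Any primitive lattice vector in $[-1,1]^n$ lying in the negative octant of $\{e_1, \ldots, e_n\}$ has the form $-\mathbf{1}_S := -\sum_{i \in S} e_i$ for a unique non-empty subset $S \subseteq \{1, \ldots, n\}$, and every such vector is automatically primitive (some coordinate equals $-1$). Thus each fan $\Sigma$ in the family determines a hypergraph
\[
H(\Sigma) := \{S \subseteq \{1, \ldots, n\} : -\mathbf{1}_S \in \Sigma(1)\}
\]
on the vertex set $\{1, \ldots, n\}$. Completeness forces $H(\Sigma)$ to have no isolated vertices: if some $i$ were absent from every $S \in H(\Sigma)$, every ray of $\Sigma$ would lie in the halfspace $\{x_i \geq 0\}$, preventing the support of $\Sigma$ from containing $-e_i$. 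The symmetric group $S_n$ acts on the family by coordinate permutations, producing isomorphic toric varieties, so the assignment descends to a map from isomorphism classes of varieties to $S_n$-equivalence classes of hypergraphs on $n$ vertices with no isolated vertices, yielding $d_n \leq \OEIShypergraphs(n)$.

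The main obstacle is verifying that the $S_n$-orbit of $H(\Sigma)$ is genuinely an invariant of the isomorphism class of $X_\Sigma$, since a toric isomorphism $X_\Sigma \cong X_{\Sigma'}$ corresponds to a lattice automorphism $\phi : N \to N$ that a priori need not permute the standard basis---it could send some $e_i$ to a ray of the form $-\mathbf{1}_S$. I would address this by leveraging the severe restrictions imposed by conditions 2 and 4: both $\phi(e_1), \ldots, \phi(e_n)$ and their images under $\phi^{-1}$ must be primitive vectors drawn from $\{e_1, \ldots, e_n\} \cup \{-\mathbf{1}_S\}_{S \neq \emptyset}$ that form a $\ZZ$-basis of $N$, and a case analysis on the sign patterns compatible with $\det(\phi) = \pm 1$ should force $\phi$ to permute $\{e_1, \ldots, e_n\}$ (possibly after composing with an element of $S_n$), which delivers the required well-definedness.
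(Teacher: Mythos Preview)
Your argument for additivity and your construction of the hypergraph map together with the $\mathfrak{S}_n$-action match the paper's proof, and your explicit treatments of $d_1=1$ and of the strict inequality $d_n<d_{n+1}$ supply steps that the paper's terse proof omits altogether.

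The one substantive divergence is your ``main obstacle.'' The paper never attempts to show that an arbitrary toric isomorphism $X_\Sigma \cong X_{\Sigma'}$ must arise from a coordinate permutation, and for the upper bound this is not needed. The paper uses only the trivial direction: permutation matrices are lattice automorphisms, so fans in the same $\mathfrak{S}_n$-orbit yield isomorphic varieties. Hence the surjection from admissible fans onto the $d_n$ isomorphism classes factors through $\mathfrak{S}_n$-orbits of fans, so $d_n$ is bounded above by the number of such orbits; the $\mathfrak{S}_n$-equivariant map $\Sigma\mapsto H(\Sigma)$ then carries these orbits into the set of unlabelled hypergraphs with no isolated vertices. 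Your proposed case analysis on the sign patterns of a general lattice automorphism $\phi$ is therefore unnecessary extra work: you are trying to make the assignment $[X_\Sigma]\mapsto [H(\Sigma)]$ well-defined on isomorphism classes, but the inequality already follows from the coarser fact that the set of isomorphism classes is a quotient of the set of $\mathfrak{S}_n$-orbits of fans, with no need to control what other lattice automorphisms might do.
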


\begin{proof}
    The first statement is a consequence of Proposition \ref{proposition:negative-octant-criterion}. We now prove the second statement.
    
    For all $i = n + 1, \ldots, \# \Sigma(1)$, we map $p_i$ to the set given by its support. For example, $(0, -1, 0, -1)$ is mapped to $\{2, 4\}$. The image of $\{p_i\}_{i = n + 1, \ldots, \# \Sigma(1)}$ is a hypergraph $G = (V, E)$ on the vertices $V = \{1, \ldots, n\}$ with no isolated vertices (since $\Sigma$ is complete). The symmetric group $\mathfrak{S}_n$ acts on fans $\Sigma$ in $N_{\QQ}$ by permuting coordinates and on hypergraphs $G = (\{1, \ldots, n\}, E)$ by permuting vertices. Since permutation matrices have determinant $\pm 1$, all fans $\Sigma$ in $N_{\QQ}$ in a given orbit belong to the same isomorphism class. On the other hand, the orbit of a hypergraph $G = (\{1, \ldots, n\}, E)$ is the underlying hypergraph $G'$ on $n$ unlabelled vertices.
\end{proof}

Proposition \ref{proposition:oeis} is inspired by the following conjecture of Ewald, although it is \emph{not} true that all additive smooth Fano toric varieties satisfy conditions 1. to 3. -- counterexamples may be found computationally already for $n \leq 4$. This is because, even if their corresponding smooth Fano polytopes satisfy Ewald's conjecture, the change of basis mapping the vertices to the standard basis of $N_{\mathbb{Q}}$ and vectors in the negative octant may \emph{stretch} the 1-skeletons.

\begin{conjecture}[Ewald]
Let $P \subset N_{\QQ}$ be a smooth Fano polytope of dimension $n$. Then, there exists $P' \subset [-1, 1]^n \subset N_{\QQ}$ such that $P \cong P'$.
\end{conjecture}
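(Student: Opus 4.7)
The plan is to attack Ewald's conjecture in two stages: first reduce, via free sum decomposition, to the case of \emph{indecomposable} smooth Fano polytopes; and then for each such $P$ try to produce a basis of $N$ in which every vertex of $P$ has coordinates in $\{-1, 0, 1\}$, which is equivalent to finding a lattice isomorphism $P \cong P' \subset [-1, 1]^n$.

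For the first stage, note that if $P \cong P_1 \oplus P_2$ is a free sum and each $P_i \subset (N_i)_{\QQ}$ embeds into $[-1, 1]^{\dim P_i}$ after a lattice automorphism of $N_i$, then the direct sum of those automorphisms embeds $P$ into $[-1, 1]^n$ inside $(N_1 \oplus N_2)_{\QQ} \cong N_{\QQ}$. Via Casagrande's theorem recalled above, this immediately settles the centrally symmetric case: such a $P$ decomposes as a free sum of intervals $[-1, 1]$, del Pezzo polytopes and pseudo-del Pezzo polytopes, all of which live in $[-1, 1]^n$ by construction. It therefore suffices to handle smooth Fano polytopes with few centrally symmetric pairs of vertices and no non-trivial free-sum decomposition.

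For this indecomposable case, I would exploit the defining smoothness property: the vertex set of every facet $F$ of $P$ forms a basis $B_F$ of $N$. Fixing such an $F$ and writing every other vertex $w \in \mathcal{V}(P)$ as $w = \sum_i c_i(w) \, v_i$ with $v_i \in B_F$, the aim is to choose $F$ minimising the quantity $\max_{w} \max_i |c_i(w)|$ and to prove that this minimum is $\leq 1$. The strategy is to define elementary moves between facets sharing all but one vertex --- analogous to bistellar flips in the normal fan $\Sigma_P$ --- and to show that whenever some $|c_i(w)| \geq 2$, there is a move strictly decreasing the sum $\sum_{w, i} |c_i(w)|$. Reflexivity of $P^{\vee}$ and the fact that smoothness of $P$ is preserved under such moves should provide enough rigidity to keep the descent well-defined.

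The hard part will be proving that the procedure actually terminates with every coordinate in $\{-1, 0, 1\}$, rather than stalling at a local minimum with some $|c_i(w)| = 2$. This is the combinatorial heart of the conjecture: there is no known structural theorem for indecomposable smooth Fano polytopes beyond the computationally accessible range, and Øbro's classification proceeds by exhaustive enumeration with no apparent pattern. A complete proof in arbitrary dimension may therefore require a genuinely new structural insight beyond coordinate-reduction arguments of the above type; an alternative, more geometric route would be to look for obstructions to ``large'' coordinates coming from positivity properties of the tangent bundle of the associated toric variety, in the spirit of Liu's theorem on bigness of the tangent bundle cited in the introduction.
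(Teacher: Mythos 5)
This statement is an open conjecture: the paper does not prove it, and to the best of current knowledge no proof exists in the literature. It is stated here only as motivation for Proposition \ref{proposition:oeis}, and the paper explicitly distances itself from it by noting that the hypotheses of that proposition fail for some additive smooth Fano toric varieties already in dimension $\leq 4$. So there is no proof in the paper to compare yours against, and your text should be read as a research programme rather than a proof.

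As a programme it has two genuine gaps. First, the reduction to indecomposable polytopes is weaker than you suggest: Casagrande's theorem, as recalled in the paper, applies only to smooth Fano polytopes with exactly $n$ centrally symmetric pairs of vertices, i.e.\ the fully centrally symmetric (in fact pseudo-symmetric) case. For a polytope with fewer such pairs there is no decomposition theorem, and ``no non-trivial free-sum decomposition'' is not a condition under which any structural classification is available; so the ``indecomposable case'' you defer to is essentially the entire conjecture, not a residual case. Second, and more seriously, the descent argument is unsubstantiated at every step: you do not define the elementary moves, you do not show that a coordinate $|c_i(w)| \geq 2$ forces the existence of a strictly decreasing move, and you yourself concede that termination without stalling is unproved. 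That termination claim \emph{is} the conjecture, so the proposal does not reduce the problem to anything simpler. The honest assessment in your final paragraph is correct: this remains open, and the paper offers no route to a proof either.
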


\section{Acknowledgements}

Some of the results and tables in this paper first appeared in FL's professional degree thesis at Universidad T\'ecnica Federico Santa Mar\'ia.

We would like to thank Alexander Kasprzyk for kindly offering to send us the relevant data in the GRDB. FL would also like to thank Crist\'obal Montecino for valuable discussions on computational aspects of toric varieties.

PM was partially supported by Fondecyt ANID Projects 1231214 and 1240101.

\bibliographystyle{amsalpha}
\bibliography{main}

\end{document}